\documentclass[10pt, reqno]{amsart}

\usepackage[skip=5pt plus1pt, indent=20pt]{parskip}

\usepackage{amsmath,amssymb,mathrsfs}
\usepackage{xcolor}
\colorlet{mdtRed}{red!50!black}
\definecolor{dblue}{rgb}{0,0,.6}
\usepackage[colorlinks,pagebackref=true]{hyperref}
\hypersetup{colorlinks,linkcolor={red},citecolor={blue},urlcolor={red}}
\usepackage[all]{xy}
\usepackage{tikz,tikz-cd,tkz-graph,enumerate}
\usepackage{extpfeil}

\hypersetup{colorlinks,linkcolor={blue},citecolor={blue},urlcolor={red}}
\renewcommand*{\backref}[1]{}
\renewcommand*{\backrefalt}[4]{[{%
		\ifcase #1 Not cited.%
		\or $\uparrow$~#2.%
		\else $\uparrow$~#2.%
		\fi%
	}]}

\usepackage{stmaryrd}

\newcommand{\mf}[1]{\mathfrak{#1}}
\newcommand{\mc}[1]{\mathcal{#1}}
\newcommand{\ms}[1]{\mathscr{#1}}
\newcommand{\bb}[1]{\mathbb{#1}}

\makeatletter
\def\@tocline#1#2#3#4#5#6#7{\relax
	\ifnum #1>\c@tocdepth 
	\else
	\par \addpenalty\@secpenalty\addvspace{#2}%
	\begingroup \hyphenpenalty\@M
	\@ifempty{#4}{%
		\@tempdima\csname r@tocindent\number#1\endcsname\relax
	}{%
		\@tempdima#4\relax
	}%
	\parindent\z@ \leftskip#3\relax \advance\leftskip\@tempdima\relax
	\rightskip\@pnumwidth plus4em \parfillskip-\@pnumwidth
	#5\leavevmode\hskip-\@tempdima
	\ifcase #1
	\or\or \hskip 2em \or \hskip 2homologyem \else \hskip 3em \fi%
	#6\nobreak\relax
	\dotfill\hbox to\@pnumwidth{\@tocpagenum{#7}}\par
	\nobreak
	\endgroup
	\fi}
\makeatother

\theoremstyle{plain}

\newtheorem{theorem}{Theorem}[section]

\newtheorem{lemma}[theorem]{Lemma}
\newtheorem{corollary}[theorem]{Corollary}
\newtheorem{proposition}[theorem]{Proposition}

\theoremstyle{definition}

\newtheorem{remark}[theorem]{Remark}

\newtheorem{definition}[theorem]{Definition}

\numberwithin{equation}{section}

\newcommand{\rank}{{\rm rank}}

\newcommand{\Coker}{{\rm Coker \ }}
\newcommand{\Pic}{{\rm Pic}}

\newcommand{\Hom}{{\rm Hom}}
\newcommand{\Ext}{{\rm Ext}}

\newcommand{\Spec}{{\rm Spec \,}}

\newcommand{\ie}{{\it i.e.\/},\ }

\renewcommand{\tilde}{\widetilde}

\newcommand{\A}{\mathbb A}

\newcommand{\w}{\omega}

\newcommand{\simpp}{{\rm{\Delta^{op}Pshv}}}

\def\<{\langle}
\def\>{\rangle} 
\def\-{\overline} 
\def\~{\widetilde}
\def\^{\widehat}
\def\fr{\mathfrak}
\def\@{\mathcal}
\def\#{\mathbb}
\def\&{\mathbf}
\def\_{\underline}
 
\def\x{\times}
\def\ox{\otimes}

\input{xy}
\xyoption{all}
\usepackage{cleveref}

\usepackage{marvosym} 
\renewcommand{\email}[2][1]{\thanks{\textit{Email address}#1: \href{mailto:#2}{#2}}}

\renewcommand{\address}[2][1]{\thanks{\textit{Address}#1: #2}} 

\newcommand\fnnum[1]{\textsuperscript{#1}}
\makeatother

\begin{document}
	
	\title{$\mathbb{A}^1$-connectedness of moduli of vector bundles on a stacky curve}

	\subjclass[2020]{14D20, 14D23, 14F42}
	\author[S. Chakraborty]{Sujoy Chakraborty}
	\address[\fnnum{1}]{Department of Mathematics,
		Indian Institute of Science Education and Research Tirupati,
		Andhra Pradesh 517507, India}
	\email[\fnnum{1}]{sujoy.cmi@gmail.com}
	
	\author[S. Holme Choudhury]{Saurav Holme Choudhury}
	\address[\fnnum{2}]{Department of Mathematics,
		Indian Institute of Science Education and Research Tirupati,
		Andhra Pradesh 517507, India}
	\email[\fnnum{2}]{sourav.ac.93@gmail.com}
	
	\author[R. Pawar]{Rakesh Pawar}
	\address[\fnnum{3}]{School of Mathematical Sciences, National Institute of Science Education and Research, HBNI,
		Bhubaneswar, Odisha- 752 050, India.
	}
	\email[\fnnum{3}]{rakeshpawar@niser.ac.in}
	\keywords{Stacky curve, Moduli stacks of vector bundles, $\#A^1$-connectedness}
	
	\begin{abstract} Let $\mf C$ be a projective stacky curve over an infinite field, together with a generating sheaf $\mc E$. We consider the moduli stack of $\mathcal{E}$-semistable vector bundles of fixed determinant and multiplicities on $\fr C$, and show that the moduli stack of such bundles is $\mathbb{A}^1$-connected. As a consequence, we conclude that the moduli stack of parabolic $\alpha$-semistable vector bundles of fixed determinant and parabolic type on a connected smooth projective curve is $\mathbb{A}^1$-connected. 
    \end{abstract}
		\maketitle
            \begin{footnotesize}
                \tableofcontents
            \end{footnotesize}

	\section{Introduction}
	  A \textit{stacky curve} over a field $k$ is a smooth finite type geometrically connected proper Deligne-Mumford stack $\mf C$ of dimension 1 such that there exists a non-empty scheme $X$ and an open immersion $X\to \fr C$, so it has generically trivial stabilizers.  
    The complement of this open subset consists of finitely many \textit{stacky points}, and the stacky nature of the curve $\fr C$ is encoded in the stabilizer group schemes at these stacky points. A \emph{tame stacky curve} $\fr C$ is a stacky curve such that the stabilizer group schemes at these stacky points are finite cyclic of order coprime to the characteristic of the base field $k$. This gives rise to new interesting features which distinguish a stacky curve from a classical (schematic) algebraic curve. For example, the fibre of a vector bundle (\ie a locally free sheaf of finite rank) $E\to \mf {C}$ at a stacky point $p$ with non-trivial stabilizer $\mu_e$ (where $\mu_e$ is the cyclic group scheme associated to the $e^{th}$ roots of unity) is a $\bb{Z}/e\bb{Z}$-graded vector space; here the abelian group $\bb{Z}/e\bb{Z}$ is thought of as the character group of the group scheme $\mu_e$. This means that beyond the rank and degree, there are additional numerical invariants for vector
	bundles on $\mf C$ called \textit{multiplicities}, which are the dimensions of the graded pieces of the fibre at	each stacky point $p$. 
	
	The underlying topological space of a stacky curve $\mf C$ admits the structure of a smooth algebraic curve $C$, known as the coarse space of $\mf C$. Unlike the case of classical algebraic curves where projectivity is defined using ample line bundles, the notion of projectivity of a stacky curve depends on the choice of certain \textit{generating sheaves}, which are higher-rank vector bundles with multiplicities of certain type \cite{OS03}. One of the motivations of studying vector bundles on a tame stacky curve $\mf C$ is their close relation to parabolic vector bundles on its coarse space $C$ when both $\mf C$ and $C$ are projective (see~\cite{Bor07, Nir08, DHMT24}). Parabolic vector bundles on a smooth projective classical curve over an algebraically closed field were introduced by Mehta and Seshadri \cite{MS80}; these are vector bundles on $C$ with prescribed flags in fibres over certain finitely many points on $C$ together with certain weights. Parabolic vector bundles and their moduli spaces and stacks play a prominent role in geometry. Thus, studying vector bundles on stacky curves and their moduli stacks gives an alternative approach to studying parabolic bundles and their moduli on classical curves. 
	
	Morel-Voevodsky \cite{MV99}, defined the $\mathbb{A}^1$-homotopy category that is suitable for studying algebraic geometric objects as spaces from the point of view of homotopy theory where the affine line $\#A^1$ is treated as the unit interval. In the vein of understanding the $\#A^1$-homotopy type of a space, the basic aspect one wishes to settle is whether the space is $\#A^1$-connected on not; in other words, determining the $0$-th $\mathbb{A}^1$-homotopy sheaf $\pi_0^{\mathbb{A}^1}$. In the current paper, we study the $\#A^1$-connectedness of particular moduli stacks of vector bundles on a curve or a stacky curve. Any stack defines an object in the $\bb{A}^1$-homotopy category (see~\Cref{sec:prelim-homotopy}) by regarding it as a simplicial presheaf via the nerve construction.  
	
	In~\cite{HY24}, Hogadi and Yadav established the $\bb{A}^1$--connectedness of the moduli stack $\mc{M}_C(n,L)$ of vector bundles  of rank $n$ and determinant $L$ on an irreducible smooth projective curve $C$ over an infinite field. In \cite[Theorem 2.16]{CC25}, the first and second-named authors showed that the open substack $\mc{M}^{ss}_C(n,L)\subset \mc{M}_C(n,L)$ of semistable vector bundles is also $\bb{A}^1$-connected. They also showed \cite[Theorem 4.3]{CC25} the $\bb{A}^1$-connectedness of the moduli stack $\mc{PM}_C^{\boldsymbol{e,m}}(n,L)$ of quasi-parabolic vector bundles on $C$ of fixed determinant $L$ and a
	quasi-parabolic data $(\boldsymbol{e,m})$ along a set of parabolic points
	on $C$; see \S~\ref{sec:parabolic}. However, the question of  
	$\bb{A}^1$-connectedness of the open substack $\mc{PM}_C^{\boldsymbol{\alpha}-ss,\boldsymbol{e,m}}(n,L)\subset \mc{PM}_C^{\boldsymbol{e,m}}(n,L)$ of $\boldsymbol{\alpha}$-semistable parabolic bundles 	 was shown for small and generic weights $\boldsymbol{\alpha}$ with $\gcd(n, \deg L) = 1$, and the general case remained unanswered.

	In this article, we answer the general case of parabolic $\boldsymbol{\alpha}$-semistable bundles on a smooth projective curve for any weights $\boldsymbol{\alpha}$, by studying vector bundles on stacky curves. Recently, there has been progress made in understanding such vector bundles and their moduli spaces and stacks \cite{DHMT24, Taams}. More precisely, let $\mf C$ be a projective stacky curve over an infinite field $k$ with a projective coarse curve $C$ and coarse space map $\pi:\mf C \to C$. Fix a generating sheaf $\mc{E}$ on $\mf C$, a positive integer $n$, a line bundle $L$ on the coarse curve $C$, and multiplicities $\underline{m}$; see \S~\ref{vb on stacky curve}.  Let $$\mc{M}_{\mf{C}}\bigl(n,\underline{m},L\bigr)$$ denote the moduli stack of vector bundles on $\mf C$ of rank $n$, multiplicities $\underline{m}$ and satisfying $\det(\pi_*E)\cong L$. It contains the open substack $$\mc{M}_{\mf{C}}^{\mc{E}-\text{ss}}\bigl(n,\underline{m},L\bigr)$$ of $\mc{E}$-semistable vector bundles on $\mf C$. The main result of this article is the following.
	\begin{theorem}[\text{Theorem \ref{thm:A1sc}}] Let $\mf C$ be a  projective stacky curve over an infinite field $k$ and $\mc{E}$ be a generating sheaf on $\mf C$, then
	$\mc{M}_{\fr C}^{\mc{E}-\text{ss}}\bigl(n,\underline{m},L\bigr)$ is $\bb{A}^1$-connected.
	\end{theorem}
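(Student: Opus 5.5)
We reduce the statement to the strategy of Hogadi--Yadav \cite{HY24} and its semistable refinement in \cite[Theorem 2.16]{CC25}. By the standard criterion, a stack $\mc X$ with a smooth atlas is $\bb A^1$-connected if $\mc X(F)$ is nonempty for every finitely generated field extension $F/k$, and any two $F$-points can be joined by a finite chain of naive $\bb A^1$-homotopies, i.e.\ maps $\bb A^1_F\to\mc X$. We may also use that $\mc X$-torsors and open substacks behave well. The steps below only use families of bundles defined over $F$, so it suffices to prove the following after base change to $F$ (which is again infinite): every pair of $\mc E$-semistable bundles $E_0,E_1$ on $\fr C_F$ with rank $n$, multiplicities $\underline m$ and $\det\pi_*E_i\cong L$ is connected by a chain of $\bb A^1$-families whose members are all $\mc E$-semistable.

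The key construction is an extension-class family, as in \cite{HY24}. First, choose a fixed bundle $G$ with the same numerical data, of a very special form. For example, take a direct sum of line bundles $\mc O_{\fr C}(D_i)$ twisted at the stacky points so that the multiplicities are $\underline m$, arranged so that $\det\pi_*G\cong L$. Then, for each $E_i$, produce a bundle $V$ together with short exact sequences
\[0\to V\to E_i\to T_i\to 0\]
in which $T_i$ is torsion supported on non-stacky points, chosen so that the multiplicities of $V$ are unaffected. Equivalently, work with extensions $0\to A\to E\to B\to 0$ in which $A,B$ are fixed and $\mathrm{Ext}^1(B,A)$ is a vector space. The family $t\mapsto t\cdot\xi$ over $\bb A^1$ connects $E$ to $A\oplus B$. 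More generally, the line through any two classes $\xi_0,\xi_1$ connects two bundles with the same graded data. The space of such data (extensions, or elementary modifications parametrized by an affine space or a rational variety with many $F$-points) is rational, so any two points are connected by lines. The determinant condition $\det\pi_*E\cong L$ is preserved because $\pi_*$ is exact on these sequences (tameness makes $\pi_*$ exact) and determinants are multiplicative. For rational connectivity inside the full moduli, this yields that $\mc M_{\fr C}(n,\underline m,L)$ is $\bb A^1$-connected, mirroring \cite[Theorem 4.3]{CC25}.

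The main obstacle is keeping every member of the chain $\mc E$-semistable, since $A\oplus B$ is typically unstable. Following \cite{CC25}, we avoid degenerating to the split point. Instead, we use that the semistable locus is open and (when nonempty) dense in an irreducible parameter space $P$, namely an affine space of extension classes or of Hecke-type modifications, which dominates the moduli. Given $E_0,E_1$, choose such a $P\cong\bb A^N_F$ containing points $p_0,p_1$ mapping to $E_0,E_1$. Let $U\subset P$ be the open semistable locus, which is nonempty since $p_0\in U$. A general line through $p_0$ meets $U$ in a dense open subset of the line, and that subset may still miss the line segment one needs. So instead we join $p_0$ to $p_1$ through a third point $q$ chosen generically, so that the lines $\overline{p_0q}$ and $\overline{qp_1}$ are not contained in $P\setminus U$. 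Since $F$ is infinite, we can then replace these lines by suitable chains of affine lines lying in $U$. Here the real difficulty arises: an $\bb A^1$ inside $U$ must avoid a closed set of codimension one, which is not automatic. To handle it, we use the trick of \cite{CC25}: show that the complement $P\setminus U$ has codimension at least $2$. This is done by computing dimensions of Harder--Narasimhan strata with respect to $\mc E$-slope, using Riemann--Roch on $\fr C$ (equivalently, parabolic Riemann--Roch on $C$) after twisting to make $\mathrm{Ext}^1$ large. With codimension $\ge 2$, a general line through $p_0$ and $p_1$ avoids the complement, which completes the proof.
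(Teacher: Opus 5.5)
Your reduction to joining $K$-points by chains of naive $\mathbb{A}^1$-homotopies is fine. The argument, however, rests entirely on the claim that the unstable locus $P\setminus U$ has codimension at least $2$. This is asserted rather than proved, and it is false in the generality of the theorem. Take $\fr C=\mathbb{P}^1$, $\mc E=\mc O$, $n=2$, $L=\mc O$. The relevant extension space is $P=\Ext^1(\mc O(m),\mc O(-m))=H^1(\mathbb{P}^1,\mc O(-2m))$, coming from the kernel $\mc O$ of $E(m)\twoheadrightarrow\mc O(2m)$. The middle term $E_\xi$ is unstable iff some nonzero $s\in H^0(\mc O(m-1))$ has $s\xi=0$ in $H^1(\mc O(-m-1))$, i.e. iff the determinant of the $m\times m$ matrix of $s\mapsto s\xi$ vanishes. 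That is a hypersurface in $P$ for every $m$, so twisting to enlarge $\Ext^1$ does not help; intrinsically, the unstable locus is the theta divisor $\{h^0(E(-1))\neq 0\}$. Codimension one also occurs when the semistable moduli is nontrivial. For example, on the stacky $\mathbb{P}^1$ with four $\mu_2$-points (rank-$2$ parabolic bundles with four full flags and small weights, whose semistable moduli space is a $\mathbb{P}^1$), $\mc O(1)\oplus\mc O(-1)$ with general flags is an unstable stratum of codimension one. Against a divisor, neither a line nor a two-step chain through a general $q$ will in general stay inside $U$, and replacing lines by ``suitable chains of affine lines in $U$'' is not justified. Even where the bound holds, you would also need something like smoothness of $P\to\mc M$ to transfer HN-stratum codimensions to $P$.

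The missing idea is that the family need not stay in $U$ at all. The paper takes the single line $\varphi:\mathbb{A}^1_K\to\mc M_{\fr C}(n,\underline m,L)$ through the two extension classes. It notes that $\mc U=\varphi^{-1}\bigl(\mc M^{\mc E\text{-ss}}\bigr)$ is open and contains $0,1$, so its complement is a finite set of closed points. It then uses the Langton-type valuative criterion (universal closedness) for $\mc M^{\mc E\text{-ss}}$ from Huang to modify the family over those points while leaving it unchanged over $\mc U$. This gives a direct semistable concordance with no codimension estimate.

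You also never construct a parameter space containing both $E_0$ and $E_1$: neither a fixed sum of line bundles $G$ nor torsion quotients $E_i\to T_i$ at non-stacky points gives one. In the paper this is the content of Sections 3--4:
fix an $\mc E$-semistable $F$ of rank $n-1$ and admissible type, which exists over $K$ by the induction hypothesis on $n$;
show via the stacky Chen-type slope bound that $\pi_*\mathcal{H}om(E_i(m),F)$ is globally generated for $m\gg 0$;
deduce that a general section $E_i(m)\to F$ is surjective, including at the stacky points, which is where admissibility of the multiplicities is used;
observe that the kernel is a line bundle $M$ determined by the determinant and multiplicities.
Consequently both $E_i$ are points of the same space $\Ext^1(F(-m),M(-m))$.
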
 

 As a result, we deduce the following consequences 
 \begin{corollary}[Corollary~\ref{cor:ratnpt}] Let the assumptions be as in the above Theroem. Then the  moduli stack $\mc{M}^{\mc{E}-\text{ss}}\bigl(n,\underline{m},L\bigr)$ admits a $K$-rational  point for any finitely generated field extension $K$ of $k$.  
 \end{corollary}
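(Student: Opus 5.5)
The corollary follows from the standard fact that $\mathbb{A}^1$-connectedness forces the sections of $\pi_0^{\mathbb{A}^1}$ to be a point over every finitely generated field extension. The plan is to show that $\pi_0^{\mathbb{A}^1}$ of the stack has a section over $K$, and then to lift that section to an actual $K$-point of the stack. Write $\mathcal{X}=\mc{M}^{\mc{E}-\text{ss}}_{\fr C}\bigl(n,\underline{m},L\bigr)$, regarded as a simplicial Nisnevich sheaf via the nerve construction. By Theorem~\ref{thm:A1sc}, $\pi_0^{\mathbb{A}^1}(\mathcal{X})\cong \Spec k$ as Nisnevich sheaves. Hence for any finitely generated field extension $K/k$ we have $\pi_0^{\mathbb{A}^1}(\mathcal{X})(\Spec K)=\ast$; in particular this set is non-empty.

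The second step is to relate this to $\mathcal{X}(K)$. Consider the canonical map of sheaves
\[
\pi_0(\mathcal{X})\longrightarrow \pi_0^{\mathbb{A}^1}(\mathcal{X}),
\]
where $\pi_0(\mathcal{X})$ is the Nisnevich sheafification of the presheaf $U\mapsto \pi_0(\mathcal{X}(U))$, i.e.\ isomorphism classes of objects of the groupoid. This map is an epimorphism of Nisnevich sheaves (Morel--Voevodsky's unstable connectivity lemma, recalled in \Cref{sec:prelim-homotopy}). The stalks of Nisnevich sheaves at a finitely generated field $K$, viewed as an essentially smooth henselian local scheme, are just the sections. Therefore surjectivity on stalks gives that $\pi_0(\mathcal{X})(K)\to \pi_0^{\mathbb{A}^1}(\mathcal{X})(K)=\ast$ is surjective. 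Moreover, sheafification does not change sections over a field, since every Nisnevich cover of $\Spec K$ splits, so $\pi_0(\mathcal{X})(K)$ is the set of isomorphism classes of $\mathcal{X}(\Spec K)$. Surjectivity onto the one-point set then forces $\mathcal{X}(\Spec K)\neq\emptyset$. Such an object is exactly an $\mc{E}$-semistable vector bundle on $\fr C_K$ with rank $n$, multiplicities $\underline{m}$ and $\det(\pi_*E)\cong L_K$.

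The main obstacle is making the second step rigorous for a stack rather than a scheme. One has to use that $\pi_0(\mathcal{X})\to\pi_0^{\mathbb{A}^1}(\mathcal{X})$ is an epimorphism for simplicial sheaves in general, not only for representable ones. One also has to check that evaluation on $\Spec K$ for $K/k$ finitely generated (essentially smooth over $k$) is legitimate. This is done by writing $K$ as a filtered colimit of function fields of smooth $k$-varieties and using that $\mathcal{X}$ is locally of finite presentation, so sections commute with this limit. Both points are standard in the framework of \cite{MV99}, as used in \cite{HY24, CC25}, and I would cite them rather than reprove them.
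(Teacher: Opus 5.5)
Your argument is correct and is essentially the paper's. The paper simply cites \cite[Remark 2.5, page 106]{MV99}, namely that an $\mathbb{A}^1$-connected space has sections over Henselian local schemes such as $\Spec K$. That remark rests on exactly what you unpack: the epimorphism $\pi_0(\mathcal{X})\to\pi_0^{\mathbb{A}^1}(\mathcal{X})$ together with evaluation at Nisnevich stalks.

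One small correction. $\Spec K$ is an essentially smooth Nisnevich point only when $K/k$ is separable, so your parenthetical ``essentially smooth over $k$'' fails for inseparable finitely generated $K$ over an imperfect $k$. For a general finitely generated $K$, run your argument for $K=k$ and pull the resulting $k$-point back along $\Spec K\to\Spec k$.
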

 \begin{corollary}[Corollary~\ref{cor:parabolic}] Let $C$ be a smooth projective curve over an infinite field. The moduli stack 
     $\mc{PM}^{\boldsymbol{\alpha}-ss,\boldsymbol{e,m}}_C\left(n,L\right)$ of $\boldsymbol{\alpha}$-semistable parabolic vector bundles on $C$ of rank $n$,
	determinant $L\in\Pic(C)$ and quasi-parabolic data $(\boldsymbol{e,m})$ is $\#A^1$-connected.
 \end{corollary}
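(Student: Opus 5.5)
The statement to prove is the last one displayed: Corollary~\ref{cor:parabolic}, the $\mathbb{A}^1$-connectedness of $\mc{PM}^{\boldsymbol{\alpha}-ss,\boldsymbol{e,m}}_C(n,L)$. I would deduce it from the main Theorem (Theorem~\ref{thm:A1sc}) through the standard correspondence between parabolic bundles on $C$ and vector bundles on a tame stacky curve.

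\textbf{Step 1: reduction to rational weights.} Let $p_1,\dots,p_r$ be the parabolic points and $\boldsymbol{\alpha}$ the weights. The $\boldsymbol{\alpha}$-semistability condition is a finite system of linear inequalities in the weights. These inequalities involve only the finitely many possible ranks, degrees and flag-intersection types of subbundles. So the locus of weights defining the same semistable substack is a rational polyhedral cell. I would therefore perturb $\boldsymbol{\alpha}$ to rational weights $\boldsymbol{\alpha}'$ with $\mc{PM}^{\boldsymbol{\alpha}-ss}=\mc{PM}^{\boldsymbol{\alpha}'-ss}$ as open substacks of $\mc{PM}^{\boldsymbol{e,m}}_C(n,L)$. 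Next, I would choose an integer $e_i$ at each $p_i$ that clears the denominators of the weights at $p_i$. If $\operatorname{char} k>0$, I would also need to arrange $e_i$ coprime to the characteristic, which may require choosing the rational perturbation carefully.

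\textbf{Step 2: the root stack.} Let $\fr C=\sqrt[e_1]{p_1/C}\times_C\cdots$ be the iterated root stack, with coarse map $\pi:\fr C\to C$. This is a projective tame stacky curve. By Borne and Nironi \cite{Bor07,Nir08}, and in families as in \cite{DHMT24}, there is an equivalence between parabolic bundles on $C$ with weights in $\frac1{e_i}\mathbb{Z}$ at $p_i$ and vector bundles on $\fr C$. Under this equivalence the quasi-parabolic type $(\boldsymbol{e,m})$ becomes multiplicities $\underline m$, and $\det(\pi_*E)$ recovers the determinant of the underlying bundle, namely $L$. Because the equivalence is compatible with base change, it gives an isomorphism of stacks $\mc{PM}^{\boldsymbol{e,m}}_C(n,L)\cong\mc M_{\fr C}(n,\underline m,L)$.

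\textbf{Step 3: matching stability (the main obstacle).} The key step is to exhibit a generating sheaf $\mc E$ on $\fr C$ such that $\mc E$-semistability corresponds exactly to $\boldsymbol{\alpha}'$-semistability. I would take $\mc E=\bigoplus_j \mc O(D_j)^{a_j}$, where the $D_j$ are sums of fractional points $\frac{l}{e_i}p_i$. I would choose the exponents $a_j$ so that the $\mc E$-twisted degree $\deg \pi_*(E\otimes\mc E^\vee)/\rank(\mc E)$ equals a positive multiple of the parabolic degree plus a rank term. This is the recipe of \cite{DHMT24}, and it relies on the flexibility of $\mc E$-slopes realizing all rational weights up to the normalization.

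\textbf{Step 4: conclusion.} Granting Step 3, the isomorphism of Step 2 restricts to an isomorphism of open substacks
$$\mc{PM}^{\boldsymbol{\alpha}-ss,\boldsymbol{e,m}}_C(n,L)\cong\mc M^{\mc E-ss}_{\fr C}(n,\underline m,L).$$
The right-hand side is $\mathbb{A}^1$-connected by Theorem~\ref{thm:A1sc}. Since $\mathbb{A}^1$-connectedness is invariant under isomorphism of stacks, the Corollary follows.
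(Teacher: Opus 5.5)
Your skeleton is the paper's: identify $\mc{PM}^{\boldsymbol{\alpha}-ss,\boldsymbol{e,m}}_C(n,L)$ with $\mc{M}^{\mc{E}-\text{ss}}_{\fr C}(n,\underline{m},L)$, for an iterated root stack $\fr C$ and a generating sheaf $\mc E=\mc E_{\boldsymbol\alpha}$ as in \eqref{eq:parabolic-stacky}, and then apply Theorem~\ref{thm:A1sc}. The paper does no more than cite that identification.

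Your Step~1 is a genuine addition. The weights $w_{p,i}(\mc E)=\sum_{1\le l\le i}m_{p,l}(\mc E)/\rank\mc E$ of a generating sheaf are rational, so a real $\boldsymbol\alpha$ must first be moved inside its chamber, and the paper leaves this implicit. The justification needs repair, though: subbundle degrees are not finitely many. What is true is that $\sum_{p,i}(r'm_{p,i}-nm'_{p,i})\alpha_{p,i}$ is bounded on the weight region. Hence only finitely many values of $nd'-r'd$ give walls meeting it, and since these are rational hyperplanes, every cell contains rational points.

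The gap is in Step~2, where you make the root orders $e_i$ clear the denominators of the weights. In characteristic $0$ this is harmless. In characteristic $\ell>0$, no careful perturbation can save it, because the denominators can be forced. For instance:
\begin{itemize}
\item take rank $2$, full flags at five points, $\deg L$ odd, and set $\beta_p=\alpha_{p,2}-\alpha_{p,1}$;
\item the five walls $\sum_{q\neq p}\beta_q-\beta_p=1$ meet only at the point where all $\beta_p=\tfrac13$;
\item for each $p$, take $L_1\oplus L_2$ with $\deg L_1=(\deg L-1)/2$, flag lines equal to the fibre of $L_1$ at $q\neq p$ and to the fibre of $L_2$ at $p$; this bundle is semistable at that point but unstable off the $p$-th wall.
\end{itemize}
So every weight vector with the same semistable locus has all $\beta_p=\tfrac13$. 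This forces $3\mid e_i$, and for $\ell=3$ your $\fr C$ is not a tame Deligne--Mumford curve.

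Step~2 also mixes two dictionaries. If the root order is read off from the weights (Borne--Biswas), the multiplicities on $\fr C$ are not $\underline m$. They are $\underline m$ placed at the positions $e_i\alpha_{p,j}$, with zeros elsewhere. The weights are then already encoded in the bundle, so $\mc E$ can no longer be tuned freely as in your Step~3.

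The fix is the paper's decoupling of the stack from the weights. Take roots of order equal to the flag lengths $e_p$, so that the multiplicities are $\boldsymbol m$. If $\ell\mid e_p$, use a larger order prime to $\ell$ and pad $\underline m$ with zeros. Then put all weight information into $\mc E_{\boldsymbol\alpha}$. First shift the weights at each point so that the smallest is $0$; this adds the same constant to every parabolic slope and so does not change semistability. After that, any strictly increasing rational weights in $[0,1)$ occur as the $w_{p,i}(\mc E)$ of a generating sheaf whose rank is a multiple of their common denominator; a direct sum of line bundles suffices. That rank is subject to no tameness condition. With this change, your Steps~3 and~4 go through.
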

    \noindent 
    \paragraph{\bf Structure of the paper:}
    
	In Section \ref{section:preliminaries}, we discuss some preliminary results on $\bb{A}^1$-connectedness in general. We also recall the notions of generating sheaves, slope and semistability for vector bundles on a stacky curve and establish some basic results.  In Section \ref{section:admissible type quotients}, we prove some technical estimates regarding slopes of vector bundles on stacky curves. Using these, we prove a result regarding the existence of a surjective morphism between two vector bundles, where the image has multiplicities of an \textit{admissible} type. In Section \ref{section:concordance}, we prove a result on direct $\bb{A}^1$-concordance (see Definition~\ref{def:concordance}) between two $K$-points of $\mc{M}_{\mf{C}}\bigl(n,\underline{m},L\bigr)$ for a field extension $K$ over $k$. In Section~\ref{section:a1-connectedness},
    the $\bb{A}^1$-connectedness of the substack $\mc{M}_{\mf{C}}^{\mc{E}-\text{ss}}\bigl(n,\underline{m},L\bigr)$ is proved using a Langton-type argument on valuative criterion of properness for $\mc{E}$-semistable vector bundles on stacky curves \cite{Hua23}. In section~\ref{sec:parabolic}, via the isomorphism of moduli stacks $$\mc{PM}^{\boldsymbol{\alpha}-ss,\boldsymbol{e,m}}_C\left(n,L\right)\cong \mc{M}_{\mf{C}}^{\mc{E}_{\boldsymbol{\alpha}}-\text{ss}}\bigl(n,\underline{m},L\bigr)$$
	as discussed in (see~\eqref{eq:parabolic-stacky}), we deduce the $\bb{A}^1$-connectedness for the corresponding parabolic moduli stack on the coarse curve $C$, which generalizes the earlier result in \cite{CC25}.
 \subsection*{Acknowledgments}
  The first-named author is supported by the DST--INSPIRE Faculty \allowbreak Fellowship (Grant No.:~DST/ INSPIRE/04/2024/001521), Ministry of Science and Technology, Government of India. The second-named author is partially supported by the same grant. They both would also like to thank IISER Tirupati for excellent working conditions. The third-named author is supported by NISER, DAE, India. 
 
    \section{Preliminaries}\label{section:preliminaries}
	\subsection{\texorpdfstring{$\bb{A}^1$-connected components of simplicial presheaves}{simp}}\label{sec:prelim-homotopy}

	Let $S$ be a Noetherian scheme of finite Krull dimension, and let
	$\mathrm{Sm}_S$ denote the category of smooth schemes of finite type over
	$S$, equipped with the Nisnevich topology. The category of simplicial presheaves $Spc_S:=\simpp({\rm Sm}_S)$ is a simplicial model category with the Nisnevich-local injective model structure as in~\cite[Theorem 1.4]{MV99}. By \cite[section 3.2, page 105]{MV99}, the above simplicial model category is endowed with the $\A^1$-local model category structure and denoted by $Spc^{\A^1}_S.$ An object of $Spc^{\A^1}_S$ is referred as a \emph{space} over $S$. Let $\mathcal{H}(S)$
	denote the $\mathbb{A}^1$-homotopy category of spaces over $S$, which is the
	homotopy category obtained from the category $Spc^{\A^1}_S$ by inverting the
	$\mathbb{A}^1$-weak equivalences, as defined in \cite{MV99}.
	\begin{definition}[Sheaf of $\mathbb{A}^1$-connected components]
		\label{def:a1-connected-components}
		For a space
		$\mathcal{X} \in \mathcal{H}(S)$, the \textbf{sheaf of
			$\mathbb{A}^1$-connected components}, denoted by $\pi_0^{\mathbb{A}^1}(\mathcal{X})$,
		is the Nisnevich sheaf on $\mathrm{Sm}_S$ defined as the sheafification of
		the presheaf
		\[
		U \longmapsto \mathrm{Hom}_{\mathcal{H}(S)}(U, \mathcal{X}),
		\]
		where $U \in \mathrm{Sm}_S$. Equivalently, if
		$\mathrm{L}_{\mathbb{A}^1}: \mathrm{sPre}(\mathrm{Sm}_S) \to
		\mathrm{sPre}(\mathrm{Sm}_S)$ denotes the $\mathbb{A}^1$-localization functor,
		then
		\[
		\pi_0^{\mathbb{A}^1}(\mathcal{X}) = a_{\mathrm{Nis}}
		\!\bigg(U \mapsto \mathrm{Hom}_{\mathcal{H}(S)}(U, L_{\#A^1}\mathcal{X})\bigg),
		\]
		where $a_{\mathrm{Nis}}$ denotes the Nisnevich sheafification of the underlying presheaf. A space $\mathcal{X}$
		is called \textbf{$\mathbb{A}^1$-connected} if
		$\pi_0^{\mathbb{A}^1}(\mathcal{X}) \cong *$.
	\end{definition}
    Throughout the paper, we will take $S=\Spec k$ where $k$ is a field. 
	\begin{definition}[\text{\cite[Definition~2.6]{HY24}}]
		\label{def:naive-a1-homotopy}
		Let $\mf{X}$ be a simplicial presheaf on $\mathrm{Sm}_k$.
		
		\begin{enumerate}[(1)]
			\item Let $U\in \mathrm{Sm}_k$. Two objects $x$ and $y$ in $\mf{X}(U)$
			are said to be \textit{naively $\bb{A}^1$-homotopic} if there exists a
			map $f: \bb{A}^1\times_k U \rightarrow \mf{X}$ satisfying $f_0= x$ and
			$f_1 = y$, where $f_i$ is the composition
			$U=\{i\}\x U\xrightarrow{i}\bb{A}^1\times_k U \xrightarrow{f} \mf{X}$. 
			
			This generates an equivalence relation on the set $\mf{X}(U)_0$ of 0-simplices of the simplicial set $\mf{X}(U)$. Let us the denote the equivalence relation by    $\sim$.
			\item We shall denote by $S^{\rm pre}(\mf{X})$ the presheaf on $\mathrm{Sm}_k$
			whose value on a $k$-scheme $U$ is given by
			\[
			U\mapsto \mf{X}(U)_0/{\sim},
			\]
			where $\sim$ is the equivalence relation in~(1). The {\bf sheaf of $\#A^1$-chain connected components}  of $\fr X$, denoted by $S(\mf{X})$ is the Nisnevich sheafification of the presheaf $S^{\rm  pre}(\mf{X})$.
		\end{enumerate}
	\end{definition}
	
	The following well-known lemma will be used crucially in determining the $\#A^1$-connectedness of a simplicial presheaf.
	\begin{lemma}(\cite[Lemma~6.1.3]{Mor05}, \cite[section 2, corollary 3.22]{MV99}) \label{lem:a1-connected-condition}
		Let  $\fr X$ be a space over an infinite
		field $k$. Then $\fr X$ is $\bb{A}^1$-connected if
		\[
		S^{\rm pre}(\fr X)(\mathrm{Spec}\,K) = \ast
		\]
		for every finitely generated separable field extension $K$ of $k$.
	\end{lemma}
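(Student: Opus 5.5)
This criterion is the unstable connectivity theorem of Morel--Voevodsky combined with the comparison between naive $\bb{A}^1$-homotopy classes and $\pi_0^{\bb{A}^1}$. The plan has three steps. First compare $S(\fr X)$ with $\pi_0^{\bb{A}^1}(\fr X)$. Then reduce triviality of a Nisnevich sheaf to its values on fields. Finally, pass from finitely generated fields to all the relevant stalks.

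\emph{Step 1 (surjection).} There is a natural map of presheaves $S^{\rm pre}(\fr X)\to \bigl(U\mapsto \Hom_{\mathcal H(k)}(U,\fr X)\bigr)$, since naively $\bb{A}^1$-homotopic maps $U\to\fr X$ become equal in $\mathcal H(k)$. Sheafifying gives $S(\fr X)\to\pi_0^{\bb{A}^1}(\fr X)$. By \cite[\S2, Corollary~3.22]{MV99}, the map $\fr X\to L_{\bb{A}^1}\fr X$ induces an epimorphism of Nisnevich sheaves $\pi_0(\fr X)\to\pi_0^{\bb{A}^1}(\fr X)$, where $\pi_0$ is the sheaf of simplicial connected components. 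This map factors through $S(\fr X)$, so $S(\fr X)\to\pi_0^{\bb{A}^1}(\fr X)$ is an epimorphism. Hence it suffices to show that $S(\fr X)$ is trivial, or just that $\pi_0^{\bb{A}^1}(\fr X)$ is a quotient of a sheaf that is trivial on all stalks.

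\emph{Step 2 (reduce to fields).} A Nisnevich sheaf is trivial iff its stalks at henselian local essentially smooth $k$-schemes are singletons. For a quotient of $\pi_0(\fr X)$ this is still not a statement about fields. Here I would invoke Morel's result \cite[Lemma~6.1.3]{Mor05}: over an infinite field, $\pi_0^{\bb{A}^1}(\fr X)$ is $\bb{A}^1$-invariant and unramified in a suitable sense. Concretely, for $U$ smooth irreducible with function field $K$, the map $\pi_0^{\bb{A}^1}(\fr X)(U)\to\pi_0^{\bb{A}^1}(\fr X)(K)$ is injective. So $\pi_0^{\bb{A}^1}(\fr X)$ is trivial iff $\pi_0^{\bb{A}^1}(\fr X)(\Spec F)=\ast$ for every finitely generated separable extension $F/k$. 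Stalks at points are filtered colimits of such fields, and smooth schemes over $k$ have separable residue and function fields, so separable extensions suffice. This step uses the infinite-field hypothesis, and I expect it to be the main obstacle: the unramifiedness/injectivity statement is deep, and I would cite it rather than reprove it.

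\emph{Step 3 (fields).} For a field $F$, $\Spec F$ is Nisnevich-local: every Nisnevich cover of $\Spec F$ has a section. So sheafification does not change values there, and $S(\fr X)(F)=S^{\rm pre}(\fr X)(F)$. By hypothesis this is $\ast$. The epimorphism of Step 1 is surjective on Nisnevich-local sections, so $\pi_0^{\bb{A}^1}(\fr X)(F)=\ast$ for all such $F$. This set is nonempty because $\fr X(F)\neq\emptyset$. By Step 2, $\pi_0^{\bb{A}^1}(\fr X)\cong\ast$, that is, $\fr X$ is $\bb{A}^1$-connected.
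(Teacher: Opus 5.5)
Your argument is the paper's: the natural epimorphism $S(\mathfrak X)\to\pi_0^{\mathbb{A}^1}(\mathfrak X)$, the equality $S(\mathfrak X)(K)=S^{\rm pre}(\mathfrak X)(K)=\ast$ at field points, surjectivity there, and then Morel's Lemma~6.1.3 (which is where $k$ infinite is used) to pass from finitely generated separable fields to the whole sheaf.

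The one thing to correct is your description of that lemma in Step~2. It does not say that $\pi_0^{\mathbb{A}^1}(\mathfrak X)$ is $\mathbb{A}^1$-invariant; that is Morel's conjecture, for which Ayoub has given counterexamples. It also does not say that $\pi_0^{\mathbb{A}^1}(\mathfrak X)(U)\to\pi_0^{\mathbb{A}^1}(\mathfrak X)(K)$ is injective for arbitrary spaces. In addition, Nisnevich stalks are values on henselian local essentially smooth schemes, not filtered colimits of fields. Bridging exactly that gap between fields and henselian local rings is the content of the lemma. So cite it directly for the statement that triviality on finitely generated separable fields implies triviality of the sheaf.
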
 
	\begin{proof}
		Assume that $S^{\rm pre}(\fr X)(\mathrm{Spec}\,K) = *$, hence $S(\fr X)(\mathrm{Spec}\,K) = *$,
		for every finitely generated separable field extension $K$ of $k$, since stalks are same for a presheaf and its sheafification. Note there exists a natural epimorphism $S(\fr X) \to \pi^{\bb{A}^1}_0(\fr X)$ of Nisnevich sheaves, hence induces a surjective map of stalks at points in the Nisnevich topology, in particular at $\Spec K$ for  every finitely generated separable field extension $K$ of $k$. Hence $\pi^{\bb{A}^1}_0(\fr X)(\Spec K)=\ast$. Since the base field $k$ is infinite, the lemma follows by \cite[Lemma~6.1.3]{Mor05}, \cite[section 2, corollary 3.22]{MV99}.
	\end{proof}
	
	\begin{remark}
		(Stack as a simplicial sheaf) Any groupoid valued presheaf on ${\rm Sm}_S$ can be seen as a simplicial presheaf by applying the nerve construction section-wise. In particular, this applies to (algebraic) stacks, which allows us to ask questions regarding the $\mathbb{A}^1$-homotopy theoretic properties of the stacks. 
	\end{remark}

	\subsection{Vector bundles on stacky curves }\hfill\label{vb on stacky curve}
    
	A \textit{stacky curve} over a field $k$ is a smooth finite type geometrically connected proper Deligne-Mumford stack $\mf C$ of dimension 1 such that there exists a non-empty scheme $X$ and an open immersion $X\to \fr C$.
    A \emph{tame stacky curve} $\fr C$ is a stacky curve such that the stabilizer group schemes at these stacky points are finite cyclic of order coprime to the characteristic of the base field $k$.
    
	Let $\mf{C}$ be a projective stacky curve over a field $k$ \ie $\fr C$ is a tame stacky curve, with the coarse space map	$\pi : \mf{C} \to C$ where $C$ is a smooth projective curve over $k$ (see Definition~\cite[Definition 1.3.1]{Taams}). We follow the notation and  conventions in \cite{Taams}. Throughout the paper, we will assume $\fr C$ a projective stacky curve over a field $k$.
	
	Let $\mathcal{F}$ be a coherent sheaf on $\mf{C}$. As $\mf{C}$ is tame, the stabilizer group scheme $G_p$ at any stacky point $p \in \mf{C}$ is cyclic of order $e_p$ \ie isomorphic to $\mu_{e_p}$. Let $i_p: B\mu_{e_p} \hookrightarrow \mf{C}$ be the inclusion of the residual gerbe at $p$. The restriction $i_p^*\mathcal{F}$ corresponds to a $\mathbb{Z}/e_p\mathbb{Z}$-graded $k(p)$-vector space, that is $i_p^*\mathcal{F} \simeq \oplus_{i=0}^{e_p - 1}k(i)^{\oplus m_{p, i}}$, where $k(i)$ is the unique irreducible $1$-dimensional $\mu_{e_p}$ representation of weight $i$. 

    \noindent
	The numbers $m_{p,i}$ are called the \textit{multiplicities} of $\mathcal{F}$ at $p$. The multiplicity vector of $\mathcal{F}$ at $p$ is the tuple
	$$m_p(\mathcal{F}) := (m_{p, 0}, m_{p, 1}, \dots, m_{p,e_p -1})$$

    \noindent
	and the collection of multiplicity vectors over the stacky points of $\mf{C}$ is denoted as 
	$$\underline{m}(\mathcal{F}) = (m_p(\mathcal{F}))_{p \in \fr D}$$

    \noindent
	where $\fr D$ denotes the set of stacky points of $\mf{C}$.

	\begin{definition}
		Let $\mathcal{E}$ be a vector bundle on $\mf{C}$ with the set of multiplicities $\_m(\@E)$ in the sense above. We define the \emph{weights} of $\mathcal{E}$ to be $w_{p, j}(\mathcal{E}) := \dfrac{\sum_{l = 1}^j m_{p,l}(\mathcal{E})}{\mathrm{rank }\mathcal{E}}$, where $j$ runs from $0$ to $e_p - 1$.
	\end{definition}
	Given a vector bundle $\mathcal{E}$ on $\mf{C}$, we have the following invariants associated to it:
	
	\begin{itemize}
		\item $\textrm{rank}(\mathcal{E})$
		\item $\textrm{deg}(\pi_* \mathcal{E})$
		\item $m_{p}(\mathcal{E})$ are the multiplicity vectors at $p$ and the multiplicities of $\@E$ is denoted by the collection $\_m(\@E) =\{m_{p}(\@E): p \ \text{is a stacky point of} \ \mf C\}$. 
	\end{itemize}
	
	This tuple $\alpha(\mathcal{E}) = (\textrm{rank}(\mathcal{E}), \textrm{deg}(\pi_* \mathcal{E}), \underline{m}(\mathcal{E})) \in \mathbb{Z} \oplus \mathbb{Z} \oplus (\bigoplus_{p\in \fr D}\mathbb{Z}^{e_p})$ is called the \textit{type} of $\mc{E}$.

	\begin{remark}
		This tuple is the same as the numerical invariant of $\mathcal{E}$ which is a class in $K_0^{\textrm{num}}(\mf{C})$ as described in \cite[Section 1.2]{DHMT24}.
	\end{remark}

	\begin{remark}\label{projform}
		For later use, we note that the projection formula holds for the coarse space morphism $\pi: \mf{C} \to C$. More precisely, for any quasi-coherent $\mathcal{O}_C$-module $G$ and any locally free $\mc{O}_{\mf{C}}$-module $\mc{F}$, we have a natural isomorphism
		$$\pi_*(\pi^*G \otimes_{\mf{C}} \mc{F}) \cong G\otimes_C \pi_*\mc{F}\,.$$
	\end{remark}

	\begin{definition}[\text{\cite[Definition 1.3.13]{Taams}}]\hfill
		\begin{enumerate}[(1)]
			\item For a vector bundle $\mathcal{E}$ and a coherent sheaf $\mathcal{F}$ on $\mf{C}$, its \textit{$\mathcal{E}$-degree} is defined by
			\[
			d_{\mathcal{E}}(\mathcal{F}) \;=\; \deg \pi_* \mathcal{H}om(\mathcal{E}, \mathcal{F}) \;-\; \operatorname{rank} \mathcal{F} \cdot \deg \pi_* \mathcal{H}om(\mathcal{E}, \mathcal{O}_\mf{C}).
			\]
			The \textit{$\mathcal{E}$-slope} of $\mc F$ is defined as $\mu_\mathcal{E}(\mathcal{F}) = d_\mathcal{E}(\mathcal{F})/\operatorname{rank} \mathcal{F}$.
			\item $\mc {F}$ is said to be \textit{$\mc E$-semistable} (respectively, \textit{$\mc{E}$-stable}), if for any proper subsheaf $\mc{F}'\subset \mc{F}$, we have $\mu_{\mc{E}}(\mc{F}')\leq \mu_{\mc{E}}(\mc{F})$ (respectively, $\mu_{\mc{E}}(\mc{F}')< \mu_{\mc{E}}(\mc{F})$). 
		\end{enumerate}
	\end{definition}

	\noindent
	 The next two results describe the relation between the $\mc{E}$-degree of a sheaf and its dual. Let $\w_{\fr C}$ be the canonical sheaf of the stacky curve $\fr C$ (see~\cite[Theorem 1.2.34]{Taams}).

    \begin{lemma}\label{lem:pushforward of dual}
		Let $\mc G$ be a vector bundle on $\mf C$. Then $\pi_*\bigl(\mc G^{\vee}\bigr)\xrightarrow{\,\,\simeq\,\,} \bigl(\pi_*(\mc{G}\otimes\omega_{\mf{C}})\bigr)^\vee\otimes_{\mc{O}_C}\omega_C.$
	\end{lemma}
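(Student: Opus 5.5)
This is Grothendieck duality for the coarse space map $\pi:\mf C\to C$, combined with the exactness of $\pi_*$ on quasi-coherent sheaves. Since $\mf C$ is tame, $\pi_*$ is exact on quasi-coherent sheaves; this is the standard property of tame Deligne--Mumford stacks and their coarse moduli spaces. Hence $R\pi_*=\pi_*$, and $\pi$ admits a right adjoint $\pi^!$ on the level of derived categories, i.e. Grothendieck duality holds for the proper morphism $\pi$.

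The key input is the identification $\pi^!\omega_C\simeq\omega_{\mf C}$. For a tame stacky curve this follows from the Riemann--Hurwitz type formula $\omega_{\mf C}\cong\pi^*\omega_C\otimes\mc O_{\mf C}\bigl(\sum_{p}(e_p-1)p\bigr)$ (cf.\ \cite[Theorem 1.2.34]{Taams}). One can also argue directly: both $\omega_{\mf C}$ and $\omega_C$ are dualizing sheaves, and the composite $\mf C\to C\to\Spec k$ has dualizing complex $\omega_{\mf C}[1]$. Since $C$ is smooth over $k$ we have $(C\to\Spec k)^!k=\omega_C[1]$. Therefore $\pi^!(\omega_C[1])=\omega_{\mf C}[1]$, so $\pi^!\omega_C=\omega_{\mf C}$, and this is a sheaf in degree $0$ because $\pi$ is finite-like (quasi-finite and proper, of relative dimension $0$).

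With this in hand, apply duality to the vector bundle $\mc G\otimes\omega_{\mf C}$:
\[
R\mc Hom_{C}\bigl(\pi_*(\mc G\otimes\omega_{\mf C}),\omega_C\bigr)\simeq \pi_*R\mc Hom_{\mf C}\bigl(\mc G\otimes\omega_{\mf C},\pi^!\omega_C\bigr)=\pi_*\mc Hom(\mc G\otimes\omega_{\mf C},\omega_{\mf C})=\pi_*(\mc G^\vee).
\]
Here the sheaf $\mc Hom$ version of duality is used. It follows from the global adjunction by localizing on $C$, since $\pi$ is compatible with flat (étale) base change on $C$, and because $\mc G\otimes\omega_{\mf C}$ is locally free the $R\mc Hom$ on $\mf C$ has no higher terms. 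On the other side, $\pi_*(\mc G\otimes\omega_{\mf C})$ is torsion-free on the smooth curve $C$, hence locally free. So its $R\mc Hom$ into the line bundle $\omega_C$ is concentrated in degree $0$ and equals $\bigl(\pi_*(\mc G\otimes\omega_{\mf C})\bigr)^\vee\otimes\omega_C$. The resulting map is the natural one induced by the trace $\pi_*\omega_{\mf C}\to\omega_C$ together with the pairing $\mc G^\vee\otimes(\mc G\otimes\omega_{\mf C})\to\omega_{\mf C}$.

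The main obstacle is justifying $\pi^!\omega_C=\omega_{\mf C}$ and the sheafified duality for the stack morphism $\pi$. If a reference for duality on tame stacks is not available, I would instead argue locally. Away from the stacky points $\pi$ is an isomorphism. Near a stacky point, étale locally $\mf C=[\Spec A/\mu_e]$ with $A=R[t]/(t^e-u)$ for a uniformizer $u$ of $R$. Then $\pi_*$ is the $\mu_e$-invariants functor, and the statement reduces to checking that the trace pairing on $\mu_e$-invariants is perfect. One verifies this degree by degree on the $\mathbb Z/e$-graded pieces $t^iR$, using $\omega_{A}=A\,dt$ and $du=e\,t^{e-1}dt$, where $e$ is invertible by tameness. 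Hence the two sides agree locally, and since the global map is natural, it is an isomorphism.
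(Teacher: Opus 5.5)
Your proposal is correct and follows essentially the same route as the paper. Both arguments rest on the sheafified duality $\mc{H}om_{\mc{O}_C}(\pi_*\mc F,\omega_C)\simeq\pi_*\mc{H}om_{\mc{O}_{\mf C}}(\mc F,\omega_{\mf C})$ for the coarse map; the paper quotes this from \cite[Remark 1.3.7]{Taams} and applies it with $\mc F=\mc G^\vee$, then dualizes on $C$ using local freeness of $\pi_*(\mc G^\vee)$, while you justify it via $\pi^!\omega_C\simeq\omega_{\mf C}$ and apply it with $\mc F=\mc G\otimes\omega_{\mf C}$ to read off the statement directly.
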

	\begin{proof} We have, by \cite[Remark 1.3.7]{Taams},
		\begin{align*}
        &\mc{H}om_{\mc{O}_C}\bigl(\pi_*(\mc{G}^{\vee}),\omega_C\bigr)\xrightarrow{\,\,\simeq\,\,} \pi_*\mc{H}om_{\mc{O}_{\mf C}}\bigl(\mc{G}^{\vee},\omega_{\mf C}\bigr)\\
			\implies & \bigl(\pi_*(\mc G^{\vee})\bigr)^{\vee}\otimes_{\mc{O}_C}\omega_C\xrightarrow{\,\,\simeq\,\,}\pi_*\bigl(\mc{G}\otimes_{\mc{O}_{\mf{C}}}\omega_{\mf{C}}\bigr) \\
			\implies& \pi_*(\mc G^{\vee})\xrightarrow{\,\,\simeq\,\,} \bigl(\pi_*(\mc{G}\otimes\omega_{\mf{C}})\bigr)^{\vee}\otimes\omega_C\,.
			\tag*{\qedhere}
		\end{align*}
	\end{proof}
	\begin{lemma}\label{lem:degree of dual}
		Let $\mc E$ and $\mc F$ be vector bundles on $\mf C$. Then $d_{\mc{E}^{\vee}}(\mc F^{\vee}) = -d_{(\mc E\otimes \omega^{\vee}_{\mf C})}(\mc F).$ 
	\end{lemma}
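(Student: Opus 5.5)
We unwind the definition and apply Lemma~\ref{lem:pushforward of dual} twice, once to each of the two terms of $d_{\mc E^\vee}(\mc F^\vee)$. By definition,
\[
d_{\mc{E}^{\vee}}(\mc F^{\vee}) = \deg \pi_*\mc{H}om(\mc E^\vee,\mc F^\vee) - \rank(\mc F)\cdot \deg\pi_*\mc{H}om(\mc E^\vee,\mc O_{\mf C}).
\]
Since $\mc E,\mc F$ are locally free, $\mc{H}om(\mc E^\vee,\mc F^\vee)\cong \mc E\otimes\mc F^\vee\cong(\mc E^\vee\otimes\mc F)^\vee$ and $\mc{H}om(\mc E^\vee,\mc O_{\mf C})\cong \mc E\cong(\mc E^\vee)^\vee$. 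This lets us rewrite both terms as pushforwards of duals of vector bundles.

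Next we apply Lemma~\ref{lem:pushforward of dual} to $\mc G=\mc E^\vee\otimes \mc F$ and to $\mc G=\mc E^\vee$. This gives
\[
\pi_*\bigl((\mc E^\vee\otimes\mc F)^\vee\bigr)\cong \bigl(\pi_*(\mc E^\vee\otimes\mc F\otimes\omega_{\mf C})\bigr)^\vee\otimes\omega_C,\qquad \pi_*(\mc E)\cong\bigl(\pi_*(\mc E^\vee\otimes\omega_{\mf C})\bigr)^\vee\otimes\omega_C.
\]
We then take degrees on the smooth projective curve $C$ using $\deg(V^\vee\otimes\omega_C)=-\deg V+\rank V\cdot(2g_C-2)$. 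Note that $\pi_*$ of a vector bundle on $\mf C$ is a vector bundle on $C$: it is torsion free, since $\pi_*$ is exact for tame stacks and preserves torsion-freeness. Its rank equals the rank of the bundle, because $\pi$ is an isomorphism over the generic point. Finally, $\mc E^\vee\otimes\mc F\otimes\omega_{\mf C}\cong\mc{H}om(\mc E\otimes\omega_{\mf C}^\vee,\mc F)$ and $\mc E^\vee\otimes\omega_{\mf C}\cong \mc{H}om(\mc E\otimes\omega^\vee_{\mf C},\mc O_{\mf C})$. We obtain
\begin{align*}
d_{\mc E^\vee}(\mc F^\vee) ={}& -\deg\pi_*\mc{H}om(\mc E\otimes\omega_{\mf C}^\vee,\mc F) + \rank(\mc E)\rank(\mc F)(2g_C-2)\\
&+\rank(\mc F)\deg\pi_*\mc{H}om(\mc E\otimes\omega_{\mf C}^\vee,\mc O_{\mf C}) - \rank(\mc F)\rank(\mc E)(2g_C-2).
\end{align*}
The two genus terms cancel because $\rank\pi_*(\mc E^\vee\otimes\mc F\otimes\omega_{\mf C})=\rank\mc E\cdot\rank\mc F$ and $\rank\pi_*(\mc E^\vee\otimes\omega_{\mf C})=\rank \mc E$. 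What remains is exactly $-d_{\mc E\otimes\omega_{\mf C}^\vee}(\mc F)$, using $\rank\mc F^\vee=\rank\mc F$.

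The only point needing care is the rank bookkeeping in the correction term $\rank\cdot(2g_C-2)$. That is, we must justify that $\pi_*$ of a rank $r$ vector bundle is locally free of rank $r$ on $C$, so that these terms cancel exactly. The rest is formal manipulation of $\mc{H}om$ and tensor identities for locally free sheaves.
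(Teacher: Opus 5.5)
Your proof is correct and follows the paper's argument step for step: you unwind the definition, apply Lemma~\ref{lem:pushforward of dual} to $\mc E^\vee\otimes\mc F$ and to $\mc E^\vee$, take degrees on $C$, and cancel the $\deg\omega_C$ terms. You also spell out that $\pi_*$ of a rank $r$ bundle is locally free of rank $r$, which the paper uses without comment. One correction there: torsion-freeness of $\pi_*$ does not come from exactness of $\pi_*$, but from the fact that a local section killed by a nonzero function vanishes on a dense open subset of the integral stack $\mf C$.
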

	\begin{proof} By definition, we have
		
        \begin{footnotesize}
            \begin{align*}
			d_{\mc{E}^{\vee}}(\mc{F}^{\vee}) =& \deg \pi_*\mc{H}om\bigl(\mc{E}^\vee,\mc{F}^\vee\bigr)-\rank(\mc F)\deg\bigl(\pi_*(\mc{E}^{\vee\vee})\bigr)\\
			=& \deg\pi_*\bigl((\mc{E}^\vee\otimes \mc F)^{\vee}\bigr) -\rank(\mc{F})\deg\bigl(\pi_*(\mc{E}^{\vee\vee})\bigr)\\
			=&\deg\Bigl(\bigl(\pi_*(\mc{E}^\vee\otimes\mc{F}\otimes\omega_{\mf{C}})\bigr)^{\vee}\otimes\omega_C\Bigr)-\rank(\mc F)\deg\Bigl(\bigl(\pi_*(\mc{E}^\vee\otimes\omega_{\mf C})\bigr)^\vee\otimes\omega_C\Bigr)\,\,\,\,\text{by Lemma}\,\,\ref{lem:pushforward of dual}\\
			=&-\deg\pi_*\bigl(\mc{E}^\vee\otimes\mc{F}\otimes\omega_{\mf{C}}\bigr) + \rank(\mc{E}^{\vee}\otimes\mc F)\deg(\omega_C)-\rank(\mc{F})\Bigl[-\deg\pi_*(\mc{E}^\vee\otimes\omega_{\mf C})+\rank(\mc E)\deg(\omega_C)\Bigr]\\	=& -\deg\pi_*\bigl((\mc E\otimes\omega_{\mf C}^\vee)^\vee\otimes\mc F\bigr) +\rank(\mc F)\deg\pi_*\bigl((\mc E\otimes\omega_{\mf C}^\vee)^\vee\bigr)\\
			=& -d_{(\mc E\otimes\omega_{\mf C}^\vee)}(\mc F)\,.       \tag*{\qedhere}
			\end{align*}
            \end{footnotesize}
	\end{proof}
	
	Since $d_\mathcal{E}$ is
	additive in short exact sequences, one obtains a Harder--Narasimhan (abbreviated as HN-) filtration with respect to
	\textit{$\mathcal{E}$-slope} $\mu_\mathcal{E}$ by standard arguments. We denote the maximal and minimal slopes appearing in this
	filtration by $\mu_\mathcal{E}^{\max}(\mathcal{F})$ and $\mu_\mathcal{E}^{\min}(\mathcal{F})$ respectively.
	\begin{proposition}\label{prop:HN-filtration}\hfill
		\begin{enumerate}[(1)]
			\item Let $\mathcal{E}$ and $\mathcal{F}$ be vector bundles on $\mf{C}$. Then
			$\mathcal{F}$ is $(\mathcal{E}\otimes\omega_{\mf C}^\vee)$-semistable if and only if $\mathcal{F}^\vee$ is $\mathcal{E}^\vee$-semistable.
			\item If
			\[
			0 = \mathcal{F}_0 \subset \mathcal{F}_1 \subset \cdots \subset \mathcal{F}_k = \mathcal{F}
			\]
			is the $(\mathcal{E}\otimes\omega_{\mf C}^\vee)$-HN filtration of $\mathcal{F}$ with $\mathcal{E}$-semistable successive quotients $G_i := \mathcal{F}_i/\mathcal{F}_{i-1}$ and
			strictly decreasing slopes $\mu_{(\mc{E}\otimes\omega_{\mf C}^\vee)}(G_1) > \cdots > \mu_{(\mc{E}\otimes\omega_{\mf C}^\vee)}(G_k)$, then the $\mathcal{E}^\vee$-HN filtration of
			$\mathcal{F}^\vee$ is given by
			\smallskip
			\[
			0 \subset (\mathcal{F}/\mathcal{F}_{k-1})^\vee \subset (\mathcal{F}/\mathcal{F}_{k-2})^\vee \subset \cdots \subset (\mathcal{F}/\mathcal{F}_1)^\vee \subset \mathcal{F}^\vee,
			\]
			
			where the successive quotients $G_i^\vee=(\mc{F}_i/\mc{F}_{i-1})^\vee$ are $\mathcal{E}^\vee$-semistable and have strictly
			decreasing slopes $\mu_{\mathcal{E}^\vee}(G_k^\vee) > \cdots > \mu_{\mathcal{E}^\vee}(G_1^\vee)$. In particular,
			\smallskip
			\[\mu_{\mathcal{E}^\vee}^{\max}(\mathcal{F}^\vee) = -\, \mu_{(\mathcal{E}\otimes\omega_{\mf C}^\vee)}^{\min}(\mathcal{F}) \,\,\,\,\,\text{and}\,\,\,\,\, \mu_{\mathcal{E}^\vee}^{\min}(\mathcal{F}^\vee) = -\, \mu_{(\mathcal{E}\otimes\omega_{\mf C}^\vee)}^{\max}(\mathcal{F}).
			\]
		\end{enumerate}
	\end{proposition}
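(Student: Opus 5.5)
The plan is to deduce everything from Lemma~\ref{lem:degree of dual} together with the standard duality between subsheaves of $\mc F^\vee$ and quotient bundles of $\mc F$ on a smooth curve. Write $\mc E' := \mc E\otimes\omega_{\mf C}^\vee$. Since $d_{\mc E}$ is additive and $\mu$ only involves ranks and $d$, Lemma~\ref{lem:degree of dual} gives $\mu_{\mc E^\vee}(\mc G^\vee) = -\mu_{\mc E'}(\mc G)$ for every vector bundle $\mc G$ on $\mf C$. On a smooth one-dimensional Deligne--Mumford stack, every coherent subsheaf of a vector bundle is locally free, and saturating a subsheaf only increases its slope. Indeed, the quotient of the saturation by the subsheaf is torsion, and a torsion sheaf has $d_{\mc E}\geq 0$: $\mc H om(\mc E,-)$ of a torsion sheaf is torsion, $\pi_*$ of it is torsion on $C$, and its rank is $0$. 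So semistability may be tested on saturated subsheaves, that is, on subbundles with locally free quotient.

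For (1), suppose $\mc F$ is $\mc E'$-semistable, and let $\mc S\subset\mc F^\vee$ be a saturated subbundle with quotient $\mc Q$ locally free. Dualizing $0\to\mc S\to\mc F^\vee\to\mc Q\to 0$ gives $0\to\mc Q^\vee\to\mc F\to\mc S^\vee\to 0$. By additivity, semistability of $\mc F$ is equivalent to $\mu_{\mc E'}(\text{quotient bundle})\geq\mu_{\mc E'}(\mc F)$ for all quotient bundles. Hence $\mu_{\mc E'}(\mc S^\vee)\geq \mu_{\mc E'}(\mc F)$, and the identity above gives $\mu_{\mc E^\vee}(\mc S)\leq\mu_{\mc E^\vee}(\mc F^\vee)$. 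The converse is symmetric: apply the same argument to $\mc F^\vee$ and use $\mc F^{\vee\vee}=\mc F$.

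For (2), dualize the filtration. Since each $G_i$ is torsion-free on a smooth curve, it is locally free. So each $\mc F/\mc F_j$ is locally free, and the sequences $0\to(\mc F/\mc F_j)^\vee\to(\mc F/\mc F_{j-1})^\vee\to G_j^\vee\to 0$ are exact. This shows that the displayed chain is a filtration of $\mc F^\vee$ whose successive quotients are $G_k^\vee,\dots,G_1^\vee$ in that order. By (1), each $G_i^\vee$ is $\mc E^\vee$-semistable; the hypothesis should read that the $G_i$ are $\mc E'$-semistable. The slope identity turns the decreasing sequence $\mu_{\mc E'}(G_1)>\dots>\mu_{\mc E'}(G_k)$ into $\mu_{\mc E^\vee}(G_k^\vee)>\dots>\mu_{\mc E^\vee}(G_1^\vee)$. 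Uniqueness of the HN filtration then identifies this chain with the $\mc E^\vee$-HN filtration of $\mc F^\vee$. The extremal-slope formulas follow by reading off the first and last quotients.

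The only point needing care is the existence and uniqueness of HN filtrations for $\mu_{\mc E}$ with a possibly non-positive twisting bundle $\mc E'$. This follows from the standard argument once $d_{\mc E}$ is additive, nonnegative on torsion, and bounded above on subsheaves of a fixed bundle. The last property comes from $\deg\pi_*\mc H om(\mc E,\mc F')\le\deg$ of a subsheaf of $\pi_*\mc H om(\mc E,\mc F)$, whose degrees are bounded on $C$. The paper treats this as standard.
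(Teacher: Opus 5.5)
Your proposal is correct and follows the paper's own argument: the slope identity $\mu_{\mc E^\vee}(\mc G^\vee)=-\mu_{\mc E\otimes\omega_{\mf C}^\vee}(\mc G)$ from Lemma~\ref{lem:degree of dual}, the correspondence between subbundles of $\mc F^\vee$ and quotient bundles of $\mc F$ for (1), and dualizing the HN filtration plus uniqueness for (2). Your additional care fills in points the paper leaves implicit: reducing to saturated subsheaves so that dualizing really gives a surjection, checking that the dualized filtration is exact, and noting that the $G_i$ should be $(\mc E\otimes\omega_{\mf C}^\vee)$-semistable rather than $\mc E$-semistable.
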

	
	\begin{proof}
		\noindent
		\begin{enumerate}
		    \item  Let
		$H \subset \mathcal{F}^\vee$ be a subbundle. Dualizing gives rise to a surjection $\mathcal{F} \xtwoheadrightarrow{\,\,\,} H^\vee$, so
		$(\mathcal{E}\otimes\omega_{\mf C}^\vee)$-semistability of $\mathcal{F}$ implies that  $\mu_{(\mathcal{E}\otimes\omega_{\mf C}^\vee)}(\mathcal{F}) \leq \mu_{(\mathcal{E}\otimes\omega_{\mf C}^\vee)}(H^\vee)$. By Lemma \ref{lem:degree of dual},
		\smallskip
		\[
		\mu_{\mathcal{E}^\vee}(H) = -\mu_{(\mathcal{E}\otimes\omega_{\mf C}^\vee)}(H^\vee) \leq -\mu_{(\mathcal{E}\otimes\omega_{\mf C}^\vee)}(\mathcal{F}) = \mu_{\mathcal{E}^\vee}(\mathcal{F}^\vee).
		\]
		The converse follows by applying the same argument to $(\mathcal{E}^\vee, \mathcal{F}^\vee)$.
		
		\medskip
		\noindent
		$(2)$ Dualizing the $(\mathcal{E}\otimes\omega_{\mf C}^\vee)$-HN filtration of $\mathcal{F}$ yields
		the filtration
		\smallskip
		\begin{align}\label{new-hn}
			0 \subset (\mathcal{F}/\mathcal{F}_{k-1})^\vee \subset \cdots \subset (\mathcal{F}/\mathcal{F}_1)^\vee \subset \mathcal{F}^\vee
		\end{align}
		with successive quotients $G_i^\vee$. By $(1)$, each $G_i^\vee$ is $\mathcal{E}^\vee$-semistable, and moreover\smallskip
		\[
		\mu_{\mathcal{E}^\vee}(G_k^\vee) = -\mu_{(\mathcal{E}\otimes\omega_{\mf C}^\vee)}(G_k) > \cdots > -\mu_{(\mathcal{E}\otimes\omega_{\mf C}^\vee)}(G_1) = \mu_{\mathcal{E}^\vee}(G_1^\vee).
		\]
		By uniqueness of HN-filtration, the filtration \eqref{new-hn} is the $\mathcal{E}^\vee$-HN filtration of $\mathcal{F}^\vee$. In particular,\smallskip
		\[
		\mu_{\mathcal{E}^\vee}^{\max}(\mathcal{F}^\vee) = -\mu_{(\mathcal{E}\otimes\omega_{\mf C}^\vee)}^{\min}(\mathcal{F}), \,\,\,\,\,\text{and}\,\,\,\,\, \mu_{\mathcal{E}^\vee}^{\min}(\mathcal{F}^\vee) = -\mu_{(\mathcal{E}\otimes\omega_{\mf C}^\vee)}^{\max}(\mathcal{F}). \qedhere
		\] 
		\end{enumerate}
	\end{proof}
	The following two results describe the $\mc{E}$-slope of tensor product of two vector bundles on $\mf C$.
	\begin{lemma}\label{lem:pullback-slope}
		Let $E$ be a vector bundle on $\mf C$, and $F$ be a vector bundle on $C$. Let $\pi:\mf{C} \to C$ be the coarse map. Then $\mu_{\mc{E}}(E\otimes \pi^*F)=\mu_{\mc{E}}(E)+\mathrm{rank}(\mc{E})\cdot\mu(F)\,.$
	\end{lemma}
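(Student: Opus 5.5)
The plan is to compute the two terms in the definition of $d_{\mc E}(E\otimes\pi^*F)$ directly, using the projection formula (Remark~\ref{projform}) to move $F$ outside of $\pi_*$, and then apply Riemann--Roch on the coarse curve $C$.

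First, rewrite the Hom sheaf. Since $\mc E$ is locally free, we have $\mc{H}om(\mc E, E\otimes\pi^*F)\cong \mc{H}om(\mc E,E)\otimes \pi^*F$, and $\mc{H}om(\mc E,E)=\mc E^\vee\otimes E$ is locally free on $\mf C$. The projection formula (Remark~\ref{projform}) then gives
\[
\pi_*\mc{H}om(\mc E, E\otimes \pi^*F)\cong F\otimes_C \pi_*\mc{H}om(\mc E,E).
\]

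Next, take degrees on $C$. The sheaf $G:=\pi_*\mc{H}om(\mc E,E)$ is a vector bundle on $C$: it is torsion free because $\pi_*$ of a locally free sheaf on a stacky curve is torsion free, and $C$ is a smooth curve. Its rank is $\rank(\mc E)\rank(E)$, since $\pi$ is an isomorphism over the generic point. For vector bundles on $C$ we have $\deg(F\otimes G)=\rank(G)\deg F+\rank(F)\deg G$, so
\[
\deg\pi_*\mc{H}om(\mc E,E\otimes\pi^*F)=\rank(\mc E)\rank(E)\deg F+\rank(F)\deg\pi_*\mc{H}om(\mc E,E).
\]

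Now assemble the $\mc E$-degree. Using $\rank(E\otimes\pi^*F)=\rank(E)\rank(F)$, we get
\[
d_{\mc E}(E\otimes\pi^*F)=\rank(F)\Bigl[\deg\pi_*\mc{H}om(\mc E,E)-\rank(E)\deg\pi_*\mc{H}om(\mc E,\mc O_{\mf C})\Bigr]+\rank(\mc E)\rank(E)\deg F,
\]
which equals $\rank(F)\,d_{\mc E}(E)+\rank(\mc E)\rank(E)\deg F$. Dividing by $\rank(E)\rank(F)$ gives
\[
\mu_{\mc E}(E\otimes\pi^*F)=\mu_{\mc E}(E)+\rank(\mc E)\,\mu(F),
\]
which is the claim.

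The only point requiring care is the rank of $\pi_*\mc{H}om(\mc E,E)$. This is the step I expect to be the main obstacle, though it is mild: pushforward along $\pi$ preserves rank because $\pi$ is an isomorphism over the dense open locus where the stabilizers are trivial, and $\pi_*$ is exact on coherent sheaves since $\mf C$ is tame. Everything else is bookkeeping.
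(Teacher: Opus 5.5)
Your proposal is correct and matches the paper's proof step for step. Both arguments use the projection formula to write $\pi_*\mc{H}om(\mc E,E\otimes\pi^*F)\cong \pi_*\mc{H}om(\mc E,E)\otimes F$, apply the degree formula for a tensor product of bundles on $C$, and regroup to get $\rank(F)\,d_{\mc E}(E)+\rank(\mc E)\rank(E)\deg F$. Your check that $\pi_*\mc{H}om(\mc E,E)$ is a vector bundle of rank $\rank(\mc E)\rank(E)$ fills in a point the paper uses without comment. The Riemann--Roch mentioned in your opening plan is never actually needed.
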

	\begin{proof} 
		By definition,
		\begin{align*}
			d_\mathcal{E}(E\otimes\pi^*F)=& \deg \pi_*\mathcal{H}om(\mathcal{E},E\otimes\pi^*F)-\rank (E\otimes\pi^*F)\deg\pi_*(\mathcal{E}^{\vee})\,.
		\end{align*}
		By projection formula,
		\begin{align*}
			\pi_*\mc{H}om(\mc{E},E\otimes\pi^*F) = \pi_*(\mc{E}^{\vee}\otimes E\otimes\pi^*F)\simeq \pi_*(\mc{E}^{\vee}\otimes E)\otimes F\,.
		\end{align*}
		It follows that
		{\small \begin{align*}
				d_\mathcal{E}(E\otimes\pi^*F)=& \deg (\pi_*\mathcal{H}om(\mathcal{E},E)\otimes F)-\rank (E\otimes\pi^*F)\deg\pi_*(\mathcal{E}^{\vee}) \\
				=& \deg\pi_*\mc{H}om(\mc{E},E)\cdot\rank(F)+ \rank(\mc{E}^{\vee}\otimes E)\cdot \deg(F)-\rank(E)\rank(F)\deg\pi_*(\mc{E}^{\vee})\\
				=& \rank(F)\cdot d_{\mc{E}}(E)+\rank(\mc{E}^{\vee}\otimes E)\cdot \deg(F)\\
				=& \rank(F)\cdot d_{\mc{E}}(E)+\rank(\mc{E})\rank(E)\cdot \deg(F).
		\end{align*}} Thus, $\mu_{\mc{E}}(E\otimes\pi^*F) = \mu_{\mc{E}}(E)+\rank(\mc{E})\cdot\mu(F)\,,$
		as claimed.
	\end{proof}
	
	The next result is a partial generalization of Lemma \ref{lem:pullback-slope} where $\mc{E}$ is chosen to be of a particular type. Even though the result is not strictly necessary for later discussions, we include it here, since we found it  interesting in its own right.
	\begin{lemma}\label{lem:tensor product slope}
		Let $\mathcal{E}$ be a vector bundle satisfying $w_{q, i}(\mathcal{E}) = i\cdot w_{q,1}(\mathcal{E})$ for all stacky points $p$ of order $e_p$ and all $i\in[0,e_p-1]$ (for example, take the standard generating sheaf $\mc{E}_{\text{fav}}$ from \cite[Definition 1.3.12, Example 1.3.15]{Taams}). For any two vector bundles $F$ and $F'$ on $\mf C$, we have 
		$$\mu_{\mc E}(F\otimes F') = \mu_{\mc E}(F)+\mu_{\mc E}(F')\,.$$
	\end{lemma}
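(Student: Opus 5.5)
The plan is to reduce everything to an explicit formula for $d_{\mc E}$ in terms of the stacky degree $\deg(F)$ (the degree on $\mf C$, in $\tfrac{1}{\mathrm{lcm}(e_p)}\bb Z$) and the multiplicities. The key input is the standard Riemann--Roch-type comparison for the coarse map, as in \cite{Taams}: for a vector bundle $V$ on $\mf C$,
\[
\deg \pi_*V \;=\; \deg V \;-\; \sum_{p\in\fr D}\frac{1}{e_p}\sum_{i=0}^{e_p-1} i\, m_{p,i}(V),
\]
with the convention that $k(i)$ has weight $i$ (up to a sign convention, which does not affect the argument). I would apply this formula to $V=\mc E^\vee\otimes F$ and to $V=\mc E^\vee$, and substitute into $d_{\mc E}(F)=\deg\pi_*(\mc E^\vee\otimes F)-\rank F\cdot\deg\pi_*(\mc E^\vee)$.

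\textbf{Step 1: the degree part.} Since $\deg(\mc E^\vee\otimes F)=\rank\mc E\cdot\deg F+\rank F\cdot\deg\mc E^\vee$, the term $\rank F\cdot\deg\mc E^\vee$ cancels. The non-correction part of $d_{\mc E}(F)$ is therefore $\rank\mc E\cdot\deg F$.

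\textbf{Step 2: the correction part.} At a stacky point $p$, the fibre of $\mc E^\vee\otimes F$ is the tensor product of graded pieces, with weights adding modulo $e_p$. The correction from $\mc E^\vee\otimes F$ is
\[
\frac{1}{e_p}\sum_{a,b} m_{p,a}(\mc E^\vee)\, m_{p,b}(F)\,\overline{(a+b)},
\]
where $\overline{(\cdot)}$ denotes the representative in $[0,e_p-1]$. I want to subtract from this $\rank F\cdot\frac1{e_p}\sum_a a\,m_{p,a}(\mc E^\vee)$.

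Here the hypothesis enters. The condition $w_{p,i}(\mc E)=i\,w_{p,1}(\mc E)$ forces the multiplicities of $\mc E$ at $p$ to be uniform, $m_{p,i}(\mc E)=\rank\mc E/e_p$; the same then holds for $\mc E^\vee$, since dualizing only permutes weights $i\mapsto -i$. With uniform multiplicities, the inner sum $\sum_a \overline{(a+b)}$ runs over a full residue system for every $b$. It is therefore independent of $b$ and equals $\sum_a a$. Summing over $b$ gives exactly $\rank F\cdot\frac1{e_p}\sum_a a\,m_{p,a}(\mc E^\vee)$, so the corrections cancel. The conclusion is
\[
d_{\mc E}(F)=\rank\mc E\cdot \deg F,\qquad\text{equivalently}\qquad \mu_{\mc E}(F)=\rank\mc E\cdot\mu(F),
\]
where $\mu$ is the stacky slope. (This is consistent with Lemma~\ref{lem:pullback-slope}, since $\deg\pi^*F=\deg F$.)

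\textbf{Step 3: conclusion.} Additivity of the stacky slope under tensor product, $\deg(F\otimes F')=\rank F\deg F'+\rank F'\deg F$, gives $\mu(F\otimes F')=\mu(F)+\mu(F')$. Multiplying by $\rank\mc E$ yields the claim.

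\textbf{Main obstacle.} I expect the hard part to be Step 2, specifically pinning down precisely what the weight hypothesis says about $m_{p,0}(\mc E)$. The indexing in the definition of $w_{p,j}$ starts at $l=1$, so the literal condition controls only $m_{p,1},\dots,m_{p,e_p-1}$. If $m_{p,0}$ differs from the others, the sum $\sum_a m_{p,a}\overline{(a+b)}$ picks up a term proportional to $b$, and the cancellation fails. I would first check against $\mc E_{\text{fav}}$, whose multiplicities are all equal, and interpret the hypothesis as uniformity of all multiplicities (normalizing, if necessary, by also imposing $w_{p,e_p}=1$). After that I would verify the sign and weight conventions of the pushforward-degree formula in \cite{Taams}; the cancellation itself is insensitive to the sign.
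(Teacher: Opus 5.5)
\paragraph{The hypothesis as stated is too weak.} Your argument is sound for $\mc E$ with uniform multiplicities $m_{p,i}(\mc E)=\rank\mc E/e_p$. The concern in your last paragraph is correct, and it affects the statement itself. The sentence in Step~2 claiming that the hypothesis forces uniformity is false: $w_{p,j}$ sums from $l=1$, so the condition only gives $m_{p,1}=\cdots=m_{p,e_p-1}$.

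Under the literal hypothesis the lemma fails. Take one stacky point $p$ of order $2$, where the hypothesis is vacuous, with $x=\pi(p)$, $\mc E=\mc O_{\mf C}^{\oplus 2}\oplus\mc O(\mc G_p)$ and $F=F'=\mc O(\mc G_p)$. Using $\pi_*\mc O(a\mc G_p)\cong\mc O_C(\lfloor a/2\rfloor x)$ one gets $d_{\mc E}(F)=\deg\pi_*(\mc E^\vee\otimes F)-\deg\pi_*\mc E^\vee=0-(-1)=1$. But $F\otimes F'\cong\pi^*\mc O_C(x)$ has $\mu_{\mc E}=3$ by Lemma~\ref{lem:pullback-slope}, and $3\neq 1+1$.

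Running your Step~2 computation without assuming $m_{p,0}(\mc E)=m_{p,1}(\mc E)$ gives
\[
d_{\mc E}(F)=\rank\mc E\cdot\deg F+\sum_{p}\frac{m_{p,1}(\mc E)-m_{p,0}(\mc E)}{e_p}\sum_{b}b\,m_{p,b}(F).
\]
The last sum is not additive under tensor products, because weights wrap around mod $e_p$. So the lemma holds exactly when the multiplicities are uniform. Your proposed reading (equivalently, imposing the condition also at $i=e_p$ with $w_{p,e_p}=1$) is therefore necessary, not optional, and it is what $\mc E_{\text{fav}}$ satisfies.

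The paper's own proof uses this silently. Its case $q=q'$ invokes $w_{q,2}$, which is out of range when $e_q=2$. More generally, passing from single gerbes to all of $\Pic(\mf C)$ needs $d_{\mc E}(\mc O(\mc G_q)^{\otimes e_q})=e_q\,d_{\mc E}(\mc O(\mc G_q))$, i.e.\ $e_q\,w_{q,1}(\mc E)=1$, which is uniformity.

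\paragraph{Comparison with the paper's route.} With the uniform hypothesis, your proof is correct but different from the paper's. The paper argues by d\'evissage:
\begin{itemize}
\item additivity of $d_{\mc E}$ in short exact sequences and the line-bundle filtration of \cite[Lemma 1.2.20]{Taams};
\item the decomposition $L\cong\pi^*M\otimes\bigotimes_p\mc O(\mc G_p)^{\otimes a_p}$, with Lemma~\ref{lem:pullback-slope} handling $\pi^*M$;
\item \cite[Proposition 1.3.17]{Taams} to evaluate $d_{\mc E}$ on products of two gerbe line bundles.
\end{itemize}
You instead prove the closed formula $\mu_{\mc E}(F)=\rank\mc E\cdot\mu(F)$, with $\mu$ the stacky slope. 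Additivity then follows at once from $\det(F\otimes F')\cong(\det F)^{\otimes\rank F'}\otimes(\det F')^{\otimes\rank F}$.

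Your route is shorter and more conceptual. It avoids the delicate bookkeeping of the paper's induction, which needs the claim for $\mc O(\mc G_q)$ against an arbitrary $F$, not just against another single gerbe. It also shows exactly which $\mc E$ the lemma holds for. The cost is the pushforward-degree formula with rational degrees. If that formula is not in \cite{Taams} verbatim, it follows by the same line-bundle d\'evissage from $\pi_*\mc O(a\mc G_p)\cong\mc O_C(\lfloor a/e_p\rfloor x)$ and the projection formula. This works because $\deg\pi_*$, the stacky degree and the multiplicities are all additive in short exact sequences of vector bundles.
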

	
	\begin{proof}
		Note that if $0\to F_1 \to F\to F_2\to 0$ is a short exact sequence of locally free sheaves, and if our claim holds for $F_1$ and $F_2$ and any vector bundle $F'$, then it also holds for $F$:
		\begin{footnotesize}
			\begin{align*}
				\mu_{\mc E}(F\otimes F') = \frac{d_{\mc E}(F\otimes F')}{\rank(F\otimes F')}
				= \sum_{i=1}^2\frac{d_{\mc E}(F_i\otimes F')}{\rank(F)\cdot\rank(F')}
				&=\sum_{i=1}^2\mu_{\mc E}(F_i\otimes F')\frac{\rank(F_i)}{\rank (F)}\\
				&=\frac{1}{\rank(F)}\sum_{i=1}^2 \rank(F_i)\bigl(\mu_{\mc{E}}(F_i)+\mu_{\mc E}(F')\bigr)\\
				&=\frac{1}{\rank(F)}\sum_{i=1}^2d_{\mc E}(F_i)+\rank(F_i)\cdot\mu_{\mc E}(F')\\
				&=\frac{1}{\rank(F)}\bigl(d_{\mc E}(F)+\rank(F)\cdot\mu_{\mc E}(F')\bigr)\\
				&=\mu_{\mc E}(F)+\mu_{\mc E}(F')\,.
			\end{align*}
		\end{footnotesize} Next, by \cite[Lemma 1.2.20]{Taams}, any vector bundle $F$ on $\mf C$ admits a sequence of surjective maps 
		\[F=F_0\twoheadrightarrow F_1\twoheadrightarrow\cdots\twoheadrightarrow F_r=0\]
		such that each $F_i$ is locally free and $L_i:=\ker(F_i\twoheadrightarrow F_{i-1})$ is an invertible sheaf of rank 1. By the above considerations, we reduce to the case where $F$ is an invertible sheaf of rank 1. Since tensor product is commutative, applying the same argument above to $F'$ by symmetry, we can reduce to the case where both $F$ and $F'$ are invertible sheaves of rank 1.
		
		\noindent
		Next, note that if the statement holds for two invertible sheaves $L_1$ and $L_2$ (and any $F$), then it also holds for $L_1\otimes L_2$:
		\begin{align*}
			\mu_{\mc E}(F\otimes L_1\otimes L_2) = \mu_{\mc E}(F\otimes L_1)+d_{\mc E}(L_2) =& \mu_{\mc E}(F)+d_{\mc E}(L_1)+d_{\mc E}(L_2)\\
			=& \mu_{\mc E}(F)+d_{\mc E}(L_1\otimes L_2)\,.
		\end{align*}
		By \cite[Corollary 1.2.11]{Taams}, any invertible sheaf $L$ on $\mf{C}$ can be written uniquely as $$L\cong \pi^*M \otimes \bigotimes_p\mc{O}(\mc{G}_p)^{\otimes a_p}$$ where $M\in \Pic(C)$ and $\mc{G}_p$ is the residual gerbe at a stacky point $p$. By Lemma \ref{lem:pullback-slope}, our claim holds for $\pi^*M$. Thus, it is enough to prove our claim for each $\mc{O}(\mc{G}_p)$\,.
		
		\noindent
		Thus, we are reduced to the case where $F=\mc{O}(\mc{G}_q)$ and $F'=\mc{O}(\mc{G}_{q'})$ for two stacky points $q$ and $q'$. Denote $\mc{F}:= \mc{O}(\mc{G}_q)\otimes\mc{O}(\mc{G}_{q'})$. We consider two cases separately:
		\begin{enumerate}[$\bullet$]
			\item If $q\neq q'$, we get
			\begin{align*}
				d_{\mc E}(F\otimes F')&= \rank(\mc{E})\Bigl(\deg\pi_*(\mc F)+ \sum_p\sum_{i=0}^{e_p-1}m_{p,i}(\mc F)w_{p,i}(\mc{E})\Bigr)\\
				&= \rank(\mc E)\Bigl(\deg(\mc{O}_C)+w_{q,1}(\mc{E})+w_{q',1}(\mc{E})\Bigr)\\
				&= d_{\mc E}(F) + d_{\mc E}(F')\,\,\,\,\text{by\,\,\cite[Proposition 1.3.17]{Taams}}\,.
			\end{align*}
			\item If $q=q'$, we get
			\begin{align*}
				d_{\mc E}(F\otimes F')&= \rank(\mc E)\Bigl(\deg(\mc{O}_C)+w_{q,2}(\mc{E})\Bigr)\\
				&= \rank(\mc{E})\cdot 2w_{q,1}(\mc{E})\,\,\,\,\text{(by assumption)}\\
				&= d_{\mc E}(F) + d_{\mc E}(F')\,.
			\end{align*}
		\end{enumerate}
		
		This proves our claim.
	\end{proof} 
	
	\noindent
	We recall the definition of generating sheaf from \cite{Taams}, which was originally defined in \cite{OS03}.
	
	\begin{definition}[Generating sheaf, {\cite[Def.~1.3.1, Thm.~1.3.13]{Taams}}]
		Let $\@E$ be a vector bundle on $\mf{C}$. We say $\@E$ is a \emph{generating sheaf} if, for every coherent sheaf $F$ on $\mf{C}$, the natural evaluation map
		\[
		\pi^*\pi_*\bigl(\mathcal{H}om(\@E, F)\bigr) \otimes \@E \longrightarrow F
		\]
		is surjective. Equivalently, $\@E$ is generating if and only if $m_{p,i}(\@E) \neq 0$ for every stacky point $p$ of order $e_p$ and every $i\in[0,e_p - 1]$.
	\end{definition}
	
	It is easy to see that, if $\mc{E}$ is a generating sheaf, then so are $\mc{E}^{\vee}$ and $\mc{E}\otimes\mc{L}$ for any line bundle $\mc{L}$ on $\mf{C}$.
	
	\noindent
	Fix a generating sheaf $\mc{E}$ on $\mf C$, a positive integer $n$, a line bundle $L$ on the coarse curve $C$, and multiplicities $\underline{m}$ as in \S~\ref{vb on stacky curve}. Let 
	\begin{align*}
		\mc{M}_{\mf{C}}\bigl(n,\underline{m},L\bigr)
	\end{align*}
	denote the moduli stack of vector bundles $E$ on $\mf C$ of rank $n$, multiplicities $\underline{m}$ and satisfying $\det(\pi_*E)\cong L$. It contains the open substack 
	\begin{align*}
		\mc{M}_{\mf{C}}^{\mc{E}-\text{ss}}\bigl(n,\underline{m},L\bigr)
	\end{align*}
	of $\mc{E}$-semistable vector bundles $E$ on $\mf C$.

	\section{Existence of quotients of admissible type}\label{section:admissible type quotients}
	
	In this section, we prove some technical results which will be used to establish the $\bb{A}^1$-connectedness results in the later sections. To be more specific, given a vector bundle $E$ on $\mf C$ of type $\alpha$, we introduce the notion of \textit{admissible types} with respect to $\alpha$, and show that any vector bundle of admissible type can be written as a quotient of $E$ after a large enough twist.  

    Throughout the remainder of the paper, we follow the following notations. Let $\mf{C}$ be a tame projective stacky curve over a field $k$, with coarse space map	$\pi : \mf{C} \to C$ where $C$ is a smooth projective curve over $k$. 

    \subsection{Some slope estimates}
	\begin{lemma}\label{lem:Chen1}
		Let $\@E$ be a generating sheaf on $\mf{C}$. Let us denote $e^i:=\rank \ \Ext^i(\@E, \@O_{\fr{C}})$ for $i=0,1$. Let $E$ be a non-zero vector bundle of finite rank on $\mf{C}$. If $ext^0(\@E, E)=0$, then $\mu^{\max}_{\@E}(E)\leq e^1-e^0 $.
	\end{lemma}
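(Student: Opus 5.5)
The plan is to bound the slope of an arbitrary nonzero subsheaf of $E$. In particular this bounds the first term $F:=\mc F_1$ of the $\mc E$-HN filtration of $E$, for which $\mu_{\mc E}(F)=\mu^{\max}_{\mc E}(E)$. The tool is Riemann--Roch on the coarse curve $C$, applied to pushforwards along $\pi$. Throughout, $g$ denotes the genus of $C$.

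\textbf{Reduction to cohomology on $C$.} Since $\mf C$ is tame, $\pi_*$ is exact on quasi-coherent sheaves. Hence for any coherent $\mc G$ on $\mf C$ we have $H^i(\mf C,\mc G)\cong H^i(C,\pi_*\mc G)$. Applied to $\mc G=\mc H om(\mc E,G)$ for a vector bundle $G$, this gives
\[
\Ext^i(\mc E,G)\cong H^i\bigl(C,\pi_*\mc H om(\mc E,G)\bigr).
\]
Moreover $\pi_*$ of a vector bundle of rank $r$ on $\mf C$ is a vector bundle of rank $r$ on $C$, since $\pi$ is an isomorphism over a dense open set. In particular $\pi_*\mc H om(\mc E,G)$ has rank $\rank(\mc E)\rank(G)$.

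\textbf{Key inequality.} Since $F\subset E$ and $\Hom(\mc E,-)$ is left exact, we get $\Hom(\mc E,F)\subset\Hom(\mc E,E)=0$. Thus $h^0\bigl(C,\pi_*\mc H om(\mc E,F)\bigr)=0$, and so $\chi\bigl(\pi_*\mc H om(\mc E,F)\bigr)\le 0$. Riemann--Roch on $C$ then gives
\[
\deg\pi_*\mc H om(\mc E,F)\le -\rank(\mc E)\rank(F)(1-g).
\]
Applying Riemann--Roch to $\pi_*(\mc E^\vee)=\pi_*\mc H om(\mc E,\mc O_{\mf C})$ gives
\[
e^0-e^1=\deg\pi_*(\mc E^\vee)+\rank(\mc E)(1-g).
\]

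\textbf{Conclusion.} Substituting both relations into the definition of $d_{\mc E}(F)$:
\[
d_{\mc E}(F)=\deg\pi_*\mc H om(\mc E,F)-\rank(F)\deg\pi_*(\mc E^\vee)\le -\rank(F)\rank(\mc E)(1-g)-\rank(F)\bigl(e^0-e^1-\rank(\mc E)(1-g)\bigr).
\]
The $(1-g)$ terms cancel, leaving $d_{\mc E}(F)\le \rank(F)(e^1-e^0)$, i.e. $\mu_{\mc E}(F)\le e^1-e^0$.

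The only point needing care is the first step: the identification of $\Ext$ groups on $\mf C$ with cohomology of pushforwards on $C$, together with the rank count for $\pi_*$. Both follow from exactness of $\pi_*$ for tame stacks and from $\pi$ being generically an isomorphism. Everything afterwards is bookkeeping, and the generating hypothesis on $\mc E$ is not really needed for the inequality itself.
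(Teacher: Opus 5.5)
Your proof is correct and is essentially the paper's argument. Both use $\Hom(\mathcal{E},F)\subset\Hom(\mathcal{E},E)=0$ for a subbundle $F\subset E$, so $\chi(\mathcal{E},F)\le 0$, and then the identity $\chi(\mathcal{E},F)=d_{\mathcal{E}}(F)+\rank(F)\,\chi(\mathcal{E},\mathcal{O}_{\mathfrak{C}})$ gives $\mu_{\mathcal{E}}(F)\le e^1-e^0$. The only difference is that the paper cites this identity as the stacky Riemann--Roch theorem of Taams, whereas you rederive it from classical Riemann--Roch on $C$ using exactness of $\pi_*$.
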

	
	\begin{proof} For any subbundle $F\subset E$, since $ext^0(\@E, E)=0$, we have $ext^0(\@E, F)=0$. By applying the Stacky Riemann-Roch theorem~\cite[Theorem 1.3.18] {Taams}, which states
		\smallskip
		$$ext^0(\@E, F)-ext^1(\mc E, F)=d_{\@E}(F)+\rank(F)(ext^0(\@E, \@O_{\mf{C}})- ext^1(\@E, \@O_{\mf{C}}))\,,$$
		we get 
		\smallskip
		$$d_{\@E}(F)+\rank(F)(ext^0(\@E, \@O_{\mf{C}})- ext^1(\@E, \@O_{\mf{C}}))\leq 0\,,$$ 
		
		\noindent
		which implies $\mu_{\@E}(F)\leq e^1-e^0.$ Hence $\mu^{\max}_{\mc{E}}(E)=\max \{\mu_{\@E}(F): F\subset E\}\leq e^1-e^0.$ 
	\end{proof}
	
	\begin{lemma}\label{lem:vanish}
		If $\mu^{\min}_{\mathcal{E}}(G) > d_{\mathcal{E}}(\omega_{\mf{C}})$ for a locally free $G$, then $H^1(\mf{C}, G) = 0$.
	\end{lemma}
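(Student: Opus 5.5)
Serre duality on the stacky curve turns the vanishing into a statement about homomorphisms out of $G$:
\[
H^1(\mf C, G)\cong \Hom(G,\omega_{\mf C})^\vee .
\]
So the goal is to show that every morphism $\varphi: G\to \omega_{\mf C}$ is zero. Suppose $\varphi\neq 0$. Its image $I\subset \omega_{\mf C}$ is a nonzero subsheaf of a line bundle on a smooth curve, hence itself a line bundle. It is a quotient of $G$ and a subsheaf of $\omega_{\mf C}$.

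The plan is to compare slopes on both sides.

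First, since $G\twoheadrightarrow I$ is a quotient, the standard HN property (proved with the same additivity of $d_{\mc E}$ used to construct the HN filtration) gives
\[
\mu_{\mc E}(I)\ \ge\ \mu^{\min}_{\mc E}(G)\ >\ d_{\mc E}(\omega_{\mf C}).
\]
Concretely, take the last HN quotient of $G$ and compose with the map to $I$. This is the usual argument that a semistable sheaf admits no nonzero map to a sheaf whose maximal slope is smaller.

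Second, $I\subset \omega_{\mf C}$ is an inclusion of rank-one sheaves, so $\omega_{\mf C}/I$ is a torsion sheaf $T$. Additivity gives
\[
d_{\mc E}(\omega_{\mf C}) = d_{\mc E}(I)+d_{\mc E}(T).
\]
Here $d_{\mc E}(T)=\deg \pi_*\mc Hom(\mc E,T)\ge 0$: the rank term vanishes, and $\pi_*$ of a torsion sheaf is a torsion sheaf on $C$, which has nonnegative degree. Hence $\mu_{\mc E}(I)=d_{\mc E}(I)\le d_{\mc E}(\omega_{\mf C})$. This contradicts the first step, so $\varphi=0$ and $H^1(\mf C,G)=0$.

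The main obstacle is justifying the Serre duality identification on the stacky curve. I would cite the duality in \cite{Taams} (the relative duality $\pi_*\mc Hom(-,\omega_{\mf C})\cong \mc Hom(\pi_*(-),\omega_C)$ of \cite[Remark 1.3.7]{Taams} already used in Lemma~\ref{lem:pushforward of dual}). Alternatively, I would derive it from exactness of $\pi_*$ in the tame case, so that $H^1(\mf C,G)=H^1(C,\pi_*G)$, and then apply classical Serre duality on $C$ together with that remark:
\[
H^1(C,\pi_*G)^\vee \cong \Hom_C(\pi_*G,\omega_C)\cong H^0\bigl(C,\pi_*\mc Hom(G,\omega_{\mf C})\bigr)=\Hom(G,\omega_{\mf C}).
\]
Beyond that, a minor point to check is that $d_{\mc E}$ is additive for coherent (including torsion) sheaves; this follows because $\mc Hom(\mc E,-)$ is exact when $\mc E$ is locally free and $\pi_*$ is exact.
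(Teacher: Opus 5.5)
Your proof is correct and follows the same route as the paper. You reduce via Serre duality to $\Hom(G,\omega_{\mf C})=0$, then squeeze the rank-one image $I$ between $\mu_{\mc E}(I)\ge\mu^{\min}_{\mc E}(G)$ from the quotient side and $d_{\mc E}(I)\le d_{\mc E}(\omega_{\mf C})$ from the nonnegativity of $\mc E$-degrees of torsion sheaves.

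One small fix concerns the first inequality. You cannot ``compose'' the last HN quotient with $G\to I$. Instead, restrict $G\to I$ to the first HN piece $G_j$ on which it is nonzero, so that it factors through the semistable $G_j/G_{j-1}$. Then use that its image $I'\subset I$ has torsion cokernel.
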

	
	\begin{proof}
		By Serre duality, $H^1(\mf{C}, G)^\vee \cong \operatorname{Hom}(G, \omega_{\mf{C}})$ \cite[Theorem 1.3.6]{Taams}. A nonzero $\phi: G \to \omega_{\mf{C}}$ has rank-$1$ image $I \subseteq \omega_{\mf{C}}$ with torsion quotient, and the $\mathcal{E}$-degree of a torsion sheaf $T$ is non-negative, since it is the length of the torsion sheaf $\pi_*\mathcal{H}om(\mathcal{E}, T)$, so $d_\mathcal{E}(I) \leq d_\mathcal{E}(\omega_{\mf{C}})$. On the other hand, $I$ is a quotient sheaf of $G$, so $d_\mathcal{E}(I) \geq \mu^{\min}_\mathcal{E}(G)$, again because passing from the locally free quotient to $I$ discards torsion of nonnegative $\mathcal{E}$-degree, hence a contradiction.
	\end{proof}

	\noindent
	The following result can be regarded as a stacky analogue of a slope estimate of tensor products of vector bundles on classical curves due to H. Chen  \cite[Proposition 2.2]{Ch09}.
	
	\begin{lemma}\label{lem:Chen22}
		There exists a constant $a$, depending only on the projective stacky curve $\mf C$ and a generating sheaf $\mc E$, such that for any two
		vector bundles $E$ and $F$ over $\mf{C}$,
		\smallskip
		$$\mu^{\min} _{(\mc{E}\otimes\omega_{\mf C})^{\vee}}(E\otimes F)\geq \mu^{\min}_{\@E}(\@E\otimes  E)+\mu^{\min}_{ (\@E\otimes\omega_{\mf C})^{\vee}}(F) - a\,.$$
	\end{lemma}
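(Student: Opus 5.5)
Following Chen's approach, I would reduce to bounding a minimal slope by testing against quotients and dualizing. Put $\mc{E}':=\mc{E}\otimes\omega_{\mf C}$, so the left-hand side is $\mu^{\min}_{\mc{E}'^{\vee}}(E\otimes F)$. By Proposition~\ref{prop:HN-filtration}(2), applied with the pair $(\mc{E}'^{\vee}\otimes\omega_{\mf C},\,\cdot\,)$, we have $\mu^{\min}_{\mc{E}'^{\vee}}(E\otimes F)=-\mu^{\max}_{(\mc{E}')^{\vee\vee}\otimes\omega^{\vee}_{\mf C}}\bigl((E\otimes F)^\vee\bigr)=-\mu^{\max}_{\mc{E}}(E^\vee\otimes F^\vee)$, since $\mc{E}'\otimes\omega_{\mf C}^\vee=\mc{E}$. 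Similarly $\mu^{\min}_{\mc{E}'^{\vee}}(F)=-\mu^{\max}_{\mc{E}}(F^\vee)$. The inequality therefore becomes
\[
\mu^{\max}_{\mc{E}}(E^\vee\otimes F^\vee)\le \mu^{\max}_{\mc{E}}(F^\vee)-\mu^{\min}_{\mc{E}}(\mc{E}\otimes E)+a .
\]
I will prove this upper bound on maximal slopes by a vanishing argument, using Lemma~\ref{lem:Chen1}.

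Let $H\subset E^\vee\otimes F^\vee$ be the maximal destabilizing subsheaf, with slope $\mu:=\mu^{\max}_{\mc{E}}(E^\vee\otimes F^\vee)$. The inclusion gives a nonzero map $H\otimes E\to F^\vee$. The key idea is to replace $E$ by $\mc{E}\otimes E$, whose $\mc{E}$-minimal slope is the quantity in the statement. Since $\mc{E}$ is generating, the evaluation map $\pi^*\pi_*\mc{H}om(\mc{E},\mc{O})\otimes\mc{E}\to\mc{O}_{\mf C}$ is surjective, and on a curve some finite sum of copies of $\mc{E}$ surjects onto $\mc{O}_{\mf C}$ after twisting by $\pi^*$ of a line bundle $N$ on $C$ of bounded degree. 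Tensoring with $E$, a finite sum of copies of $\mc{E}\otimes E\otimes\pi^*N$ surjects onto $E$. Hence there is a nonzero map $H\otimes\mc{E}\otimes E\otimes\pi^*N\to F^\vee$, and so $\mu^{\min}_{\mc{E}}(H\otimes\mc{E}\otimes E\otimes\pi^*N)\le\mu^{\max}_{\mc{E}}(F^\vee)$. By Lemma~\ref{lem:pullback-slope}, the twist by $\pi^*N$ changes slopes only by the constant $\rank(\mc{E})\deg N$.

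The remaining, and main, step is a lower bound
\[
\mu^{\min}_{\mc{E}}(H\otimes G)\ge \mu^{\min}_{\mc{E}}(H)+\mu^{\min}_{\mc{E}}(G)-a' \qquad (G=\mc{E}\otimes E),
\]
with $\mu^{\min}_{\mc{E}}(H)=\mu$ because $H$ is semistable. Lemma~\ref{lem:tensor product slope} gives additivity of slopes only for special $\mc{E}$, and in positive characteristic tensor products of semistable bundles need not be semistable. So I would argue as in Chen's proof via Lemma~\ref{lem:Chen1}. Take a quotient $H\otimes G\twoheadrightarrow Q$ of minimal slope and consider $H^\vee\otimes Q$. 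Choose a line bundle twist $\pi^*M$ such that $\mc{H}om(\mc{E},\,\cdot\,)$ of the relevant piece has no sections, and apply Lemma~\ref{lem:Chen1} to bound its maximal slope by $e^1-e^0$. Lemma~\ref{lem:vanish} and Riemann--Roch would then convert sections of $\mc{H}om(H,Q\otimes\pi^*M)$ into slope inequalities. All discrepancies depend only on $\mf C$ and $\mc{E}$: the constants $e^0$, $e^1$, $d_{\mc{E}}(\omega_{\mf C})$, $\rank\mc{E}$, $\deg N$ and the multiplicity corrections at the stacky points. Collecting them into $a$ and substituting back into the displayed inequality gives the statement. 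I expect this tensor-lower-bound step, and the bookkeeping that keeps the constants uniform, to be the main obstacle.
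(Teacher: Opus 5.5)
Your dualization is correct and matches the paper's first reduction: the statement is equivalent to
\[
\mu^{\max}_{\mc{E}}(E^\vee\otimes F^\vee)\le \mu^{\max}_{\mc{E}^\vee\otimes\omega_{\mf{C}}^\vee}(\mc{E}^\vee\otimes E^\vee)+\mu^{\max}_{\mc{E}}(F^\vee)+a ,
\]
and the first term on the right is $-\mu^{\min}_{\mc{E}}(\mc{E}\otimes E)$. After that, however, nothing is actually proved. The generating-sheaf surjection only trades the lemma for the estimate $\mu^{\min}_{\mc{E}}(H\otimes G)\ge\mu^{\min}_{\mc{E}}(H)+\mu^{\min}_{\mc{E}}(G)-a'$. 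That is a tensor-product bound of the same kind as the lemma; by your own reduction it even implies it. Your sketch for it is circular. Lemma~\ref{lem:Chen1} only produces nonzero maps out of $\mc{E}$, never out of $\mc{O}_{\mf{C}}$. So to force the relevant $\Hom(\mc{E},\,\cdot\,)$ to vanish after a twist, you need a lower bound on $\mu^{\min}_{\mc{E}}$ of a bundle of the form $\mc{E}\otimes H$ or $\mc{E}\otimes G$, in terms of $\mu^{\min}_{\mc{E}}(H)$ or $\mu^{\min}_{\mc{E}}(G)$. That is another tensor estimate. Moreover, passing to $H^\vee$ replaces the $\mc{E}$-slope of $H$ by its $\mc{E}^\vee\otimes\omega_{\mf{C}}^\vee$-slope (Proposition~\ref{prop:HN-filtration}). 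So $\mu^{\min}_{\mc{E}}(H)=\mu$ does not directly control the $\mc{E}$-slopes of $H^\vee\otimes Q$. Lemma~\ref{lem:vanish} plays no role here. Closing your route would need an ingredient you do not supply, e.g.\ a uniform comparison of $\mu_{\mc{E}}$ with $\rank(\mc{E})$ times an additive (orbifold) slope.

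The missing idea is that no auxiliary $H$, generating-sheaf trick or general tensor bound is needed. The factor $\mc{E}\otimes E$ in the statement is exactly what Lemma~\ref{lem:Chen1} produces. Apply it to $E^\vee\otimes F^\vee\otimes\mc{M}$ with $\mc{M}=(\pi^*L)^{\otimes -r}$, where $L$ is ample on $C$. Suppose there is a nonzero element of $\Hom(\mc{E},E^\vee\otimes\mc{M}\otimes F^\vee)=\Hom(\mc{E}\otimes E\otimes\mc{M}^\vee,F^\vee)$, and let $G$ be its image. Then $\mu_{\mc{E}}(G)\le\mu^{\max}_{\mc{E}}(F^\vee)$. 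Since $G^\vee\subset\mc{E}^\vee\otimes E^\vee\otimes\mc{M}$, also $\mu_{\mc{E}^\vee\otimes\omega_{\mf{C}}^\vee}(G^\vee)\le\mu^{\max}_{\mc{E}^\vee\otimes\omega_{\mf{C}}^\vee}(\mc{E}^\vee\otimes E^\vee\otimes\mc{M})$. By Lemma~\ref{lem:degree of dual}, $\mu_{\mc{E}}(G)+\mu_{\mc{E}^\vee\otimes\omega_{\mf{C}}^\vee}(G^\vee)=0$. Let $b$ be the least $\mc{E}^\vee\otimes\omega_{\mf{C}}^\vee$-degree of $\pi^*$ of an ample line bundle on $C$. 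Choose $r$ so that $\mu^{\max}_{\mc{E}^\vee\otimes\omega_{\mf{C}}^\vee}(\mc{E}^\vee\otimes E^\vee\otimes\mc{M})+\mu^{\max}_{\mc{E}}(F^\vee)\in[-b,0)$; this forces the $\Hom$ to vanish. Lemma~\ref{lem:Chen1} then gives $\mu^{\max}_{\mc{E}}(E^\vee\otimes F^\vee)+d_{\mc{E}}(\mc{M})\le e^1-e^0$. Since $d_{\mc{E}}(\mc{M})=d_{\mc{E}^\vee\otimes\omega_{\mf{C}}^\vee}(\mc{M})$ by Lemma~\ref{lem:pullback-slope}, you get the inequality with $a=b+e^1-e^0$.
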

	\begin{proof}
		By Proposition \ref{prop:HN-filtration}, it is enough to show that for some constant $a$, the inequality holds:
		\smallskip
		$$\mu^{\max}_ {\@E}(E^{\vee}\otimes F^{\vee})\leq \mu^{\max}_{ \@E^{\vee}\ox \omega_{\mf C}^{\vee}}(\@E^{\vee}\otimes  E^{\vee})+\mu^{\max}_{\mc{E}}(F^{\vee}) + a$$
		\_{\bf Step 1:}  
		To see this, we first claim that  for two non-zero vector bundles $E'$ and $E''$ on $\mf C$,
		\smallskip
		$$\mu^{\max}_{ \@E^{\vee}\ox \omega_{\mf C}^{\vee}}(\@E^{\vee}\otimes E')+\mu^{\max}_{ \@E}(E'')< 0\,\,\,\,\text{implies}\,\,\,\,\mu^{\max}_{ \@E}(E' \otimes E'') \leq e^1-e^0\,.$$
		To see this claim, assume $\mu^{\max}_{ \@E}(E' \otimes E'') > e^1-e^0$, then by Lemma \ref{lem:Chen1}, $\Hom(\@E, E'\otimes E'')\neq 0.$ Hence, there is a non-zero morphism $\@E\to E'\otimes E''$, equivalently a non-zero  $\phi: \@E\otimes E'^{\vee}\to E''$. Let $G$ be the image of $\phi$. Then we have 
		$$\mu_{\@E}(G)\leq \mu^{\max}_{\@E}(E'')$$ and 
		$$\mu_{\@E^{\vee}\ox \omega_{\mf C}^{\vee}}(G^{\vee})\leq \mu^{\max}_{\@E^{\vee}\ox \omega_{\mf C}^{\vee}}(\@E^{\vee}\otimes E').$$
		Hence, adding the two inequalities, combining with Lemma~\ref{lem:degree of dual}, we deduce that
		\smallskip
		$$\mu^{\max}_{\@E^{\vee}\ox \omega_{\mf C}^{\vee}}(\@E^{\vee}\otimes E')+\mu^{\max}_{\@E}(E'')\geq \mu_{\@E}(G) + \mu_{\@E^{\vee}\ox \omega_{\mf C}^{\vee}}(G^{\vee}) = 0.$$

		\noindent
		\_{\bf Step 2:}  
	
		Let  $$b=b(\mf{C}) = \min\{d_{\@E^{\vee}\ox \omega_{\mf C}^{\vee}}(\pi^*L) : L \in
		\Pic(C), L \ \text{is ample on $C$}\}.$$ 
		We claim that, there exists a line bundle $\@M$ on $\mf{C}$ such that
		\smallskip
		\begin{align}\label{2}
			-b\leq \mu^{\max}_{\@E^{\vee}\ox \omega_{\mf C}^{\vee}}(\@E^{\vee}\otimes  E^{\vee})+\mu^{\max}_{\mc{E}}(F^{\vee})+d_{\@E^{\vee}\ox \omega_{\mf C}^{\vee}}(\@M)
			=\mu^{\max}_{\@E^{\vee}\ox \omega_{\mf C}^{\vee}}( \@E^{\vee}\otimes E^{\vee}\otimes \@M)+\mu^{\max}_{\mc{E}}(F^{\vee})<0.
		\end{align} 
		To see this inequality, first note that $b>0$. Let  $\alpha=\mu^{\max}_{\@E^{\vee}\ox \omega_{\mf C}^{\vee}}(\@E^{\vee}\otimes  E^{\vee})+\mu^{\max}_{\mc{E}}(F^{\vee}).$ We can choose an integer $r$ such that $$\dfrac{\alpha}{b}<r\leq\dfrac{\alpha}{b}+1.$$ Now let $L$ be an ample line bundle on $C$ such that $b=d_{\@E^{\vee}\ox \omega_{\mf C}^{\vee}} (\pi^*L)$. Let $\@M=(\pi^*L)^{\otimes -r}$. Then $\alpha+d_{\@E^{\vee}\ox \omega_{\mf C}^{\vee}}(\@M)=\alpha -rb<0$ and $$ -b\leq \alpha +d_{\@E^{\vee}\ox \omega_{\mf C}^{\vee}} (\@M) <0.$$ Hence the inequalities. 
		Now applying the Step 1 to  $E'=E^{\vee}\otimes \@M$ and $E''=F^\vee$, we get 
		\smallskip
		$$\mu^{\max}_{\@E}(E^{\vee}\otimes \@M\otimes F^{\vee})\leq e^1-e^0$$
		which implies
		\smallskip
		$$\mu^{\max}_{\@E}(E^{\vee}\otimes F^{\vee})+d_{\mathcal{E}}(\@M) \leq e^1-e^0\,.$$
		
		\noindent
		Note that, since $\mc M$ is pullback of some line bundle $L'$ on $C$, we get \smallskip
		$$d_{\mc E}(\mc M) = d_{\@E^{\vee}\ox \omega_{\mf C}^{\vee}}(\mc M) = \rank(\mc{E})\deg(L')\,\,\,\text{by Lemma}\,\,\ref{lem:pullback-slope}\,.$$
		Thus, 
		\smallskip
		$$\mu^{\max}_{\@E}(E^{\vee}\otimes F^{\vee})\leq e^1-e^0-d_{\@E^{\vee}\ox \omega_{\mf C}^{\vee}}(\@M)\leq \mu^{\max}_{\@E^{\vee}\ox \omega_{\mf C}^{\vee}}(\@E^{\vee}\otimes  E^{\vee})+\mu^{\max}_{\@E}(F^{\vee})+ b + e^1-e^0\,,\,\,\,\text{by}\,\,\eqref{2}.$$ 
		
		\noindent
		It follows that, letting $a:=b+e^1-e^0$, we have
		\smallskip
		\[\mu^{\max}_{\@E}(E^{\vee}\otimes F^{\vee})\leq \mu^{\max}_{\@E^{\vee}\ox \omega_{\mf C}^{\vee}}(\@E^{\vee}\otimes  E^{\vee})+\mu^{\max}_{\@E}(F^{\vee}) + a\,.\qedhere\]
		
	\end{proof}

    \subsection{Existence of generic surjections}
	\noindent
	Let $\mc{O}_C(1)$ be a very ample line bundle on $C$, and for any integer $n$, define $\mc{O}_{\mf{C}}(n):= \pi^*\mc{O}_{C}(n)$. Fix $E$ of type $\alpha$ in the sense of subsection~\ref{vb on stacky curve}. We say that a type $\beta$ is \textit{admissible} with respect to $\alpha$, if rank $\beta$ is $\textrm{rank } \alpha -1$, and for every stacky point $p$ the multiplicities of $\beta$ and $\alpha$ are the same for all but one weight $i_p$, that is, $m_{p, j}(\beta) = m_{p, j}(\alpha) - \delta_{i_p, j}$.
	
	\noindent
	Let $E$ be a vector bundle on $\mf{C}$ of type $\alpha$. Let $F$ be a vector bundle of type $\beta$, admissible with respect to $\alpha$. Let 
	\smallskip
	\begin{equation*}
		H_m := \mathcal{H}om(E(m), F)\,.
	\end{equation*}
	Let $p_i\in\mf{C}$ be a stacky point of order $e_i$. The fiber of $H_m$ at $p_i$ can be described as follows. \'Etale-locally around $p_i$, one has $\mf{C} = \bigl[\operatorname{Spec} R' / \mu_{e_i}\bigr]$, where $R'=\frac{R[t]}{(t^{e_i}-s)}$ and $\mu_{e_i}$ acts on $R'$ by $\zeta\cdot t = \zeta t$, and the coarse moduli  $C$ equals $\Spec R$, where $R=(R')^{\mu_{e_i}}$ is a DVR with uniformizer $s$. By \cite[Proposition 3.12]{Bor07}, the fibers of $E(m)$ and $F$ at $p_i$ have weight space decompositions under the $\mu_{e_i}$-action:
	\smallskip
	\begin{align*}
		E(m)|_{p_i} = \bigoplus_{j=0}^{e_i-1} V_{i,j}\,\,\,\text{and}\,\,\,F|_{p_i} = \bigoplus_{j=0}^{e_i-1}W_{i,j}
	\end{align*}
	where $V_{i,j}$ and $W_{i,j}$ are the $j$-weight spaces for the $\mu_{e_i}$ actions on $E(m)|_{p_i}$ and $F|_{p_i}$, respectively. Thus, the weight space decomposition of $(H_m)|_{p_i} = \text{Hom}_{\kappa(p_i)}(E(m)|_{p_i}, F|_{p_i})$ is given by
	\smallskip
	\begin{align*}
		(H_m)|_{p_i} = \text{Hom}_{\mu_e}\bigl(E(m)|_{p_i},F|_{p_i}\bigr)\oplus W\,,
	\end{align*}
	where $\text{Hom}_{\mu_{e_i}}\bigl(E(m)|_{p_i},F|_{p_i}\bigr) = (H_m|_{p_i})^{\mu_{e_i}}$ is precisely the $0$-weight space, and $W$ is consists of the remaining weight spaces with nonzero weights.
	\begin{proposition}\label{prop:gen}
		Let $E$ be a vector bundle of type $\alpha$ on $\mf C$. Let $d:=\deg(\pi_*E)$. Then there exists a positive integer $m(\mf C,\alpha)$ depending only on $\mf C$ and $\alpha$, such that for any $m\ge m(\mf C,\alpha)$, and any $\mc{E}$-semistable vector bundle $F$ on $\mf C$ of an admissible type $\beta$ satisfying $\deg(\pi_*F)= d+mn$, the sheaf $\pi_* H_m$ is globally generated on $C$. More precisely:
		\begin{enumerate}[(i)]
			\item for every stacky point $p_i$ of order $e_{p_i}$, let $x_i:= \pi(p_i)$; then the evaluation map
			\smallskip
			\[
			\operatorname{ev}_{p_i} : H^0(C, \pi_* H_m) \otimes k(x_i)\longrightarrow \bigoplus_{j=0}^{e_{p_i}-1} \operatorname{Hom}(V_{i,j}, W_{i,j}) = \operatorname{Hom}_{\mu_{e_i}}\!\big(E(m)|_{p_i}, F|_{p_i}\big)
			\]
			is surjective, and 
			\item for every non-stacky closed point $x$, the evaluation map 
			$$\operatorname{ev}_x : H^0(C, \pi_* H_m)\otimes k(x) \longrightarrow H_m|_x \cong \operatorname{Hom}\bigl(k(x)^n, k(x)^{n-1}\bigr)$$ is surjective. 
		\end{enumerate}
		
	\end{proposition}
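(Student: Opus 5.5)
Global generation of $\pi_*H_m$ on the curve $C$ will follow from a cohomology vanishing. On a smooth projective curve, a vector bundle $V$ is globally generated at a closed point $x$ as soon as $H^1(C, V(-x)) = 0$. On the stack, the twist by $x$ becomes a twist by $\pi^*\mc{O}_C(-x)$, and the projection formula (Remark~\ref{projform}) gives $\pi_*(H_m\otimes \pi^*\mc{O}_C(-x))\cong (\pi_*H_m)(-x)$. Since $\pi_*$ is exact for a tame stack, $H^1(C,(\pi_*H_m)(-x)) = H^1(\mf C, H_m\otimes\pi^*\mc{O}_C(-x))$. So the plan is to show
\[
H^1\bigl(\mf C,\ \mc{H}om(E(m),F)\otimes \pi^*\mc{O}_C(-x)\bigr)=0
\]
for all closed points $x$ of $C$ and all $m\ge m(\mf C,\alpha)$. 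This vanishing will come from Lemma~\ref{lem:vanish}. For the statement to be usable with the generating sheaf $(\mc E\otimes\omega_{\mf C})^\vee$ appearing in Lemma~\ref{lem:Chen22}, I will apply Lemma~\ref{lem:vanish} with $\mc{E}$ replaced by that generating sheaf; its proof works for any generating sheaf.

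The slope bound comes next. Write $G_x:=E^\vee\otimes F\otimes\pi^*\mc{O}_C(-m)\otimes\pi^*\mc{O}_C(-x)$. By Lemma~\ref{lem:Chen22} with $E^\vee$ in place of $E$,
\[
\mu^{\min}_{(\mc E\otimes\omega_{\mf C})^\vee}(E^\vee\otimes F)\ \ge\ \mu^{\min}_{\mc E}(\mc E\otimes E^\vee)+\mu^{\min}_{(\mc E\otimes\omega_{\mf C})^\vee}(F)-a .
\]
Twisting by $\pi^*\mc{O}_C(-m-x)$ shifts every HN slope by $-\rank(\mc E)(m+\deg x)$, by Lemma~\ref{lem:pullback-slope}.

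The first term does not depend on $F$. To make it depend only on $\alpha$, I would use that $E$ ranges over bundles of fixed type, or simply allow the constant to depend on $E$ (the statement fixes $E$).

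The second term is the main obstacle, because $F$ is $\mc E$-semistable but the bound needs $\mu^{\min}$ for a different generating sheaf, $(\mc E\otimes\omega_{\mf C})^\vee$. I would handle this with the explicit formula $d_{\mc E'}(G)=\rank(\mc E')\bigl(\deg\pi_*G+\sum_{p,i}m_{p,i}(G)w_{p,i}(\mc E')\bigr)$ from \cite[Prop.~1.3.17]{Taams}. Since the multiplicity contributions are bounded by constants depending only on $\mf C$, $\rank F$ and the two generating sheaves, the slopes $\mu_{\mc E}$ and $\mu_{\mc E'}/\rank\mc E'$ of any subsheaf differ by a bounded amount. Hence $\mu^{\min}_{\mc E'}(F)\ge \tfrac{\rank\mc E'}{\rank \mc E}\mu_{\mc E}(F)-c$, with $c$ depending only on $\mf C$, $\mc E$ and $n$.

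Finally, $\mu_{\mc E}(F)$ is determined by $\deg\pi_*F=d+mn$ and the type $\beta$, and it grows like $\rank(\mc E)\,\tfrac{mn}{n-1}$, which is strictly faster than the shift $\rank(\mc E)\, m$. The $\deg x$ term needs care, since $x$ ranges over all closed points. I would avoid it by twisting by $\pi^*\mc{O}_C(-1)$ instead of $\mc{O}(-x)$ and using Castelnuovo--Mumford regularity on $C$ ($H^1(V(-1))=0$ implies $V$ is globally generated). Then for $m\gg0$ (depending only on $\mf C,\alpha$) the minimal slope exceeds $d_{(\mc E\otimes\omega_{\mf C})^\vee}(\omega_{\mf C})$, and Lemma~\ref{lem:vanish} gives the vanishing.

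For the precise statements (i) and (ii), global generation of $\pi_*H_m$ means $H^0(C, \pi_*H_m)\otimes k(x)\to(\pi_*H_m)\otimes k(x)$ is surjective. At a non-stacky point $\pi$ is an isomorphism nearby, so $(\pi_*H_m)\otimes k(x)=H_m|_x$, which gives (ii). At a stacky point, in the local chart $[\Spec R'/\mu_{e_i}]$, the sheaf $\pi_*H_m$ is the invariant module $(H_m(R'))^{\mu_{e_i}}$. Reducing modulo $s$ and composing with restriction to $t=0$ gives a map $(\pi_*H_m)\otimes k(x_i)\to (H_m|_{p_i})^{\mu_{e_i}}=\bigoplus_j\Hom(V_{i,j},W_{i,j})$. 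This map is surjective because taking invariants is exact in the tame case. Composing with the surjection from global sections yields (i).
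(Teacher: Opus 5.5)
Your proposal is correct and follows the paper's overall strategy: a Chen-type bound from Lemma~\ref{lem:Chen22}, $H^1$-vanishing from Lemma~\ref{lem:vanish}, then global generation plus a weight-space analysis at the stacky points. Two steps are carried out differently.

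\textbf{The generating-sheaf mismatch.} The paper avoids the obstacle you single out by a substitution. It applies Lemma~\ref{lem:Chen22} with $\mc E$ replaced by $\mc E':=(\mc E\otimes\omega_{\mf C})^\vee$. Since $(\mc E'\otimes\omega_{\mf C})^\vee\cong\mc E$, the $F$-term comes out directly as $\mu^{\min}_{\mc E}(F)=\mu_{\mc E}(F)$, and Lemma~\ref{lem:vanish} is then used for $\mc E$ itself. Your comparison is also sound. For locally free $G$, the normalized slopes $\mu_{\mc E}(G)/\rank\mc E$ and $\mu_{\mc E'}(G)/\rank\mc E'$ differ by at most the number of stacky points, because all weights lie in $[0,1]$; applied to quotients of $F$, this gives your bound. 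It proves the more general fact that $\mu^{\min}$ is insensitive to the choice of generating sheaf up to a uniform constant, but the substitution makes it unnecessary here.

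\textbf{Global generation.} The paper proves $H^1(\mf C,H_m(-x))=0$ point by point for $m\ge m(x)$, then uses Nakayama and a finite cover of $C$. Your single vanishing $H^1(\mf C,H_m\otimes\pi^*\mc O_C(-1))=0$, combined with Castelnuovo--Mumford regularity, is uniform in $x$ from the outset. This is cleaner, because the paper's covering step needs care: the Nakayama neighbourhood depends on $m$ and $F$, and $m(x)$ grows with $\deg x$, which need not be bounded when $k$ is not algebraically closed. Your exactness-of-invariants argument for (i) also justifies the paper's bare assertion that the image of $(\pi_*H_m)|_{x_i}\to H_m|_{p_i}$ is exactly the invariant part.

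\textbf{Two small remarks.}
\begin{enumerate}
\item Your first option for making the $E$-term depend only on $\alpha$ would not work, since bundles of a fixed type have $\mu^{\min}$ unbounded below. Your fallback of letting the constant depend on $E$ is exactly what the paper's proof delivers as well.
\item Like the paper, you compare $\rank(\mc E)\frac{mn}{n-1}$ with the shift $\rank(\mc E)m$, which tacitly assumes $\deg\mc O_C(1)=1$. In general the twist shifts slopes by $\rank(\mc E)\,m\deg\mc O_C(1)$, so the hypothesis should be read as $\det\pi_*F\cong L(mn)$, as in Proposition~\ref{prop:concordance}.
\end{enumerate}
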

	
	\begin{proof}

		Denote $\mc{E}':= (\mc{E}\ox\omega_{\mf C})^{\vee}$\,.  Replacing $\mc E$ by $\mc{E}'$ in Lemma \ref{lem:Chen22}, we get
		\smallskip
		$$\mu^{\min} _{\mc{E}}(E\otimes F)\geq \mu^{\min}_{\@E'}(\@E'\otimes  E)+\mu^{\min}_{\@E}(F) - a\,.$$
		Let $F$ be $\mc{E}$-semistable. Since $H_m = \mc{H}om(E(m),F)\simeq E^\vee\otimes F(-m)$, we get
		\smallskip
		\begin{align*}
			\mu^{\min}_{\mc{E}}(H_m)
			&\geq \mu^{\min}_{\mc{E}'}(\@E'^{\vee}\otimes  E^{\vee})+\mu^{\min}_{\mc{E}}\bigl(F(-m)\bigr) - a\,, \,\,\,\,\text{by Lemma}\,\,\ref{lem:Chen22} \\
			& = \mu^{\min}_{\mc{E}}\bigl(F(-m)\bigr) +c\,,\,\,\,\,\,\qquad\qquad \quad\text{for} \,\, c:=\mu^{\min}_{\mc{E}'}(\@E'^{\vee}\otimes  E^{\vee})-a\\ 
			& =\mu^{\min}_{\mc{E}}(F)- \rank(\@E)\cdot m +c\,,\,\qquad\text{by Lemma}\,\,\ref{lem:pullback-slope}\\ 
			&=\mu_{\mc{E}}(F)- \rank(\@E)\cdot m +c\,,\,\qquad\text{since}\,\,F\,\,\text{is}\,\,\mc{E}\text{-semistable}\\   
			&=\dfrac{d_{\mc{E}}(F)}{\rank(F)}-\rank(\mc{E})\cdot m +c\\
			&= \dfrac{\rank(\mc{E})}{\rank(F)}\Bigl(\deg(\pi_*F)+\sum_p\sum_{i=0}^{e_p-1}m_{p,i}(F)\,w_{p,i}(\mc{E})\Bigr)-\rank(\@E)\, m +c\\
			& =\rank(\@E) \dfrac{d+mn}{n-1} - \rank(\@E)\, m +c',\quad\text{for some constant}\,\, c'\\
			&  = \rank(\@E) \bigg(\dfrac{d}{n-1}+\dfrac{m}{n-1}\bigg)+c'\,. 
		\end{align*}

		\noindent
		Next, by a slight abuse of notation, let us denote $H_m(-x):=H_m\otimes \pi^*\mc{O}_C(- x)$ for a closed point $x\in C$. Note that 
		
		\smallskip
		\begin{align*}
			\mu_{\mc{E}}^{\min}\bigl(H_m(-x)\bigr)
			&=\mu_{\mc{E}}^{\min}(H_m) + \rank(\mc{E})\cdot \deg(\mc{O}_C(-x))\,\,\,\,\,\,\text{by Lemma}\,\,\ref{lem:pullback-slope}\\
			&=\mu_{\mc{E}}^{\min}(H_m) - \rank(\mc{E})\cdot \deg(\mc{O}_C(x))\,.
		\end{align*}
		Since $\mu_{\mc{E}}^{\min}(H_m)\to\infty$ as $m\to \infty$, it follows that for each $x\in C$ there exists an integer $m(x)$ satisfying 
		\medskip
		$$\mu_{\mc{E}}^{\min}(H_m(-x))>d_{\mc{E}}(\omega_{\mf{C}})\,\,\,\text{for}\,\,m\ge m(x)\,.$$ 
		Thus, by Lemma \ref{lem:vanish}, we conclude that \medskip
		\begin{align}\label{1}
			H^1(\mf C, H_m(-x))=0\,\,\,\text{for}\,\,m\ge m(x)\,.
		\end{align}
		
		\noindent Consider the following short exact sequence:
		\medskip
		$$0 \to (\pi_*H_m)(-x) \to \pi_* H_m \to \pi_*H_m|_x \to 0$$
		
		\noindent
		By projection formula (see Remark \ref{projform}),  and \cite[Proposition 1.2.4]{Taams} together with \eqref{1}, it follows that
		\medskip
		$$H^1\bigl(C, (\pi_* H_m)(-x)\bigr)=H^1\bigl(C,\pi_*(H_m (-x))\bigr)=H^1\bigl(\mf{C}, H_m(-x)\bigr) = 0\,.$$
		It follows that, in the associated long exact sequence, $H^0(C,\pi_*H_m)\otimes k(x) \xtwoheadrightarrow{\,\,} \pi_* H_m|_x$\,. In other words, 
		the natural evaluation morphism
		\medskip
		\[\mathrm{ev}:H^0( C,\,\pi_*H_m)\otimes \mc{O}_{ C} \longrightarrow \pi_*H_m\]
		is surjective when restricted to the fiber over $x$ for all $x\in C$ and $m\ge m(x)$. By Nakayama's lemma, the stalk of the coherent sheaf $\Coker(\mathrm{ev})$ given by the Cokernel of the morphism ${\rm ev}$, vanishes at $x$ for $m\ge m(x)$. Hence, there is an affine open  neighborhood $U_x$ of $x$, such that the restriction $\mathrm{ev}|_{U_x}$ is surjective for $m\ge m(x)$. We can cover $C$ with finitely many such $U_{x}$'s, and choosing an $m(\mf C,\alpha)$ larger than these finitely many $m(x)$'s, we conclude that $\mathrm{ev}$ is an epimorphism of sheaves, for all  $m\ge m(\mf C,\alpha)$.

		\noindent
		
		Finally, let $x_i=\pi(p_i)\in C$. Then the fiber $(\pi_*H_m)|_{x_i}$ also contains $\text{Hom}_{\mu_{e_i}}\bigl(E(m)|_{p_i},F|_{p_i}\bigr)$ as a direct summand. Moreover, the restriction of the natural map $\pi^*\pi_*H_m \longrightarrow H_m$ to the fiber at $p_i$ gives rise to the map
		\medskip
		\[(\pi_*H_m)|_{x_i} \longrightarrow (H_m)|_{p_i}\]
		whose image is exactly $\text{Hom}_{\mu_{e_i}}\bigl(E(m)|_{p_i},F|_{p_i}\bigr)=\bigoplus_{j=0}^{e_{p_i}-1} \operatorname{Hom}(V_{i,j}, W_{i,j})$. Pre-composing this map with the natural surjection $H^0(C,\pi_*H_m)\otimes k(x_i) \xtwoheadrightarrow{\,} (\pi_*H_m)|_{x_i}$ gives us part ($i$).

		\noindent
		Part ($ii$) is the classical statement for projective curves. 
	\end{proof}

    \begin{remark} 
        We note that by Riemann-Roch theorem for curves, $\dim V_m=H^0(\mf{C}, H_m)=H^0(C, \pi_*H_m)$ is independent of $m$ for $m\geq m(\fr C, \alpha).$
     \end{remark}
	\noindent
	Finally, we show the main result of this section, that a generic morphism $E(m) \to F$ in Proposition~\ref{prop:gen} is a surjective morphism of sheaves.
	
	\begin{proposition}[]\label{prop:surjection}
		Let $k$ be an infinite field. Fix the same hypotheses as in Proposition~\ref{prop:gen}. In addition, we assume $n = \textrm{rank }E\geq 2$. For $m \geq m(\fr C,\alpha)$, there exists a section in $H^0(\mf{C}, H_m)$ given by a surjective morphism $\phi : E(m) \xtwoheadrightarrow{\,} F$.
	\end{proposition}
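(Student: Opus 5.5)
We plan a dimension count (Bertini/Kleiman-type argument) on the finite-dimensional vector space $V_m:=H^0(\mf C,H_m)=H^0(C,\pi_*H_m)$, regarded as an affine space over the infinite field $k$. Write $D\subset V_m$ for the locus of sections $\phi$ that fail to be surjective at some closed point of $\mf C$. Since $F$ is locally free and $\phi:E(m)\to F$ is a map of vector bundles, $\phi$ is surjective as a map of sheaves if and only if its fibre $\phi|_p$ is surjective at every point $p$; at a stacky point $p_i$ this fibre is the $\mu_{e_i}$-equivariant map $\bigoplus_j V_{i,j}\to\bigoplus_j W_{i,j}$, and surjectivity means every graded component $V_{i,j}\to W_{i,j}$ is surjective. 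We aim to show that $D$ is contained in a proper closed subset of $V_m$. Because $k$ is infinite, $V_m(k)$ is Zariski dense, so some $k$-point $\phi\notin D$ exists, giving the required surjection.

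\textbf{Non-stacky locus.} Let $U=C\setminus\pi(\fr D)$ and consider the incidence variety $Z_U=\{(\phi,x): x\in U,\ \operatorname{rank}\phi|_x<n-1\}\subset V_m\times U$. By Proposition~\ref{prop:gen}(ii), for each $x$ the map $\mathrm{ev}_x:V_m\otimes k(x)\to\operatorname{Hom}(k(x)^n,k(x)^{n-1})$ is surjective. Hence the fibre of $Z_U\to U$ over $x$ is the preimage, under a surjective linear map, of the determinantal locus of non-surjective $(n-1)\times n$ matrices. That locus has codimension $n-(n-1)+1=2$. So $\dim Z_U\le \dim V_m-2+1=\dim V_m-1$, and the projection of $Z_U$ to $V_m$ has closure of dimension $<\dim V_m$. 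We will run this argument after base change to $\bar k$; the resulting locus is defined over $k$ and is proper.

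\textbf{Stacky points.} There are finitely many stacky points, so it suffices to show that for each $p_i$ the bad locus in $V_m$ is a proper closed subset. By Proposition~\ref{prop:gen}(i), $\mathrm{ev}_{p_i}$ surjects onto $\bigoplus_j\operatorname{Hom}(V_{i,j},W_{i,j})$. Admissibility of $\beta$ gives $\dim W_{i,j}=\dim V_{i,j}-\delta_{j,i_p}\le\dim V_{i,j}$, so each graded piece is a map from a space of dimension at least that of the target. Therefore the locus of tuples in which some component fails to be surjective is a proper closed subset: a union of determinantal loci of codimension $\ge1$. Here $\dim V_{i,j}$ equals $m_{p,j}(\alpha)$ because twisting by $\pi^*\mc O_C(m)$ does not change multiplicities. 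The preimage of this locus in $V_m$ is then proper closed, since the preimage of a proper closed set under a surjective linear map is proper.

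The union of these finitely many proper closed subsets together with the closure of the image of $Z_U$ is a proper closed subset of $V_m$, and any $k$-point outside it works. We expect the main obstacle to be the non-stacky dimension count. One has to make sure the codimension-$2$ estimate holds uniformly in families over $U$ and that the fibres of $Z_U$ are computed correctly, i.e.\ that $\mathrm{ev}$ gives a surjective bundle map $V_m\otimes\mc O_U\to\pi_*H_m|_U$ so that $Z_U$ is a closed subscheme with the stated fibre dimension. This is also where $n\ge2$ is used, ensuring the target $\operatorname{Hom}(k^n,k^{n-1})$ is nontrivial and the codimension count $n-(n-1)+1=2$ is valid.
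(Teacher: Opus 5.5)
Your proposal is correct and follows essentially the same route as the paper. Over the non-stacky part of $C$, you pull the codimension-$2$ determinantal locus back along the surjective evaluation map of Proposition~\ref{prop:gen}(ii) and project it to $V_m$; at each of the finitely many stacky points, Proposition~\ref{prop:gen}(i) gives a bad locus of codimension at least $1$; and since $k$ is infinite, you pick a $k$-point outside the resulting proper closed subset. Your incidence-variety count ($\dim Z_U\le \dim V_m-1$, then project to $V_m$) is a slightly cleaner version of the paper's Step~1, which runs the same count via flat pullback to the total space $\mathbb{A}(V)$ over $C$.
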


	\begin{proof}
		Fix $m\geq m(\fr C, \alpha)$ as in Proposition~\ref{prop:gen}, set $V= H^0(\mf{C}, H_m)=H^0(C, \pi_*H_m)$. Let $\fr D = \{p_1, \dots, p_r\}$ denote the stacky points in $\fr C$, and let $D :=\pi(\fr D)$.  
        
        Let us denote the geometric vector bundle over $C$ associated to the vector space $V$ by $\#A(V)=\Spec (S((V\ox\@O_{C})^{\vee}))$, the spectrum of the symmetric $\@O_{C}$-algebra $S((V\ox\@O_{C})^{\vee})$ associated to the trivial vector bundle $V\ox\@O_{C}$ of $\rank$ equal to the $\dim V$. We similarly also denote by  $\#V(\pi_*H_m) $ the geometric vector bundle over $C$ associated to the locally free sheaf $\pi_*H_m$ (see~\cite[Chapter 2, Exercise 5.18]{Har} for notation). 
        
        Given $\phi\in H^0(\mf{C}, H_m)$, we say $\phi$ satisfies the property
         \begin{center}
           $(\ast_p):$ if for point $p\in \fr C$, the induced map on fibers   $\phi|_p: E(m)|_{p}\to F|_{p}$ is not surjective.
        \end{center}
        We show that the subset of sections $\phi\in H^0(\mf{C}, H_m)$, in the affine space $\#A(V)$, which satisfies the property $(\ast_p)$, for some point $p\in \fr C$, is a proper subset of $\#A(V).$
        
        To see this, we form subsets \begin{enumerate}
            \item $Z\subset \#A(V)$ such that $\phi\in Z(k)$ iff ($\ast_p)$ holds, for some point $p\in \fr C-\fr D$, and 
          \item  for each stacky point $p_i\in \fr D$, subsets $Z_{p_i}\subset \#A(V)$ such that $\phi\in Z_{p_i}(k)$ iff ($\ast_{p_i})$ holds. 
        \end{enumerate}
        Finally, we show that $\overline{Z}\bigcup (\cup_{i=1}^r Z_{p_i})$ is a proper subset of $\#A(V).$

        \noindent
        We will do this in steps: 
        
        \noindent
        \underline{{\bf Step 1:} (for non-stacky points)} Consider the evaluation morphism
		\[
		\operatorname{ev} : V\otimes \@O_{C}\longrightarrow \pi_*H_m=\pi_*\operatorname{\@Hom}_{\@O_{C}}\!\big(E(m), F\big)\] 
		which is shown to be surjective in Proposition~\ref{prop:gen}. The induced morphism of geometric vector bundles $\#A(V)\to \#V(\pi_*H_m) $ is epimorphism of schemes. Hence the restriction to $C-D$ is also surjective. Let $Y$ be the closed subscheme of   $\#V(\pi_*H_m) $ associated to the  ideal sheaf $\@I$ of the $\@O_{C}$-algebra $Sym((\pi_*H_m)^\vee)$ determined by on trivializing Zariski open cover  corresponding to  the ideal generated by determinants of minor $(n-1)\x(n-1)$ matrices. Note that $Y$ is of codimension atleast 2 in $\#V(\pi_*H_m) $, since it is codimension atleast 2 when restricted to the trivializing Zariski cover.   

        \noindent
		Consider the fiber product $Z:=Y\x_{\#V(\pi_*H_m)}\#A(V)$, Since the morphism $\#A(V)\to \#V(\pi_*H_m) $ is flat, the inverse image of $Y$ (which is $Z$) is of codimension at least $2$ in $\#A(V)$. In particular, $\dim Z\leq \dim \#A(V)-2=\dim V-2.$ 
        
        \noindent
        \underline{{\bf Step 2:} (for stacky points)}  Let $p_i\in \fr D$ be a stacky point. By Proposition~\ref{prop:gen}, $$\operatorname{ev}_{\pi(p_i)} : V \twoheadrightarrow \bigoplus_j \operatorname{Hom}_{\mu_{p_i}}(V_{i,j}, W_{i,j})$$ is surjective linear map. In the target,  there exists a unique index $j_i$ such that $\dim W_{i,j_i} = \dim V_{i,j_i}-1$, and $\dim W_{i,j} = \dim V_{i,j}$ for $j\neq j_i$. The non-surjective locus is the union over $j$ of the loci where the $j$-th block fails to surject: 
		
		for $j \neq j_i$ the non-surjective locus in $\operatorname{Hom}(V_{i,j}, W_{i,j})$ is a determinantal hypersurface (codimension $1$); 
		
		for $j = j_i$ the non-surjective locus in $\operatorname{Hom}(V_{i,j}, W_{i,j})$ has codimension $2$. 

        \noindent
		In all cases, the union is a proper closed subset of codimension at least 1, so its preimage  $Z_{p_i} \subset \mathbb{A}(V)$ under the map $ev_{\pi(p_i)}$ is a closed subscheme of dimension at most $(\dim V - 1)$. As the stacky locus is $0$-dimensional, so, unlike the classical count, codimension $1$ suffices here.

        \noindent
		\underline{{\bf Step 3:}} 
		Thus, from Steps 1 and 2, the locus of non-surjective $\phi$ is contained in $\overline{Z} \bigcup (\cup_{i=1}^r Z_{p_i})$, which is of dimension at most $\dim V - 1 < \dim V = \dim \mathbb{A}(V)$ hence $\overline{Z} \bigcup (\cup_{i=1}^r Z_{p_i})$ is a proper closed subscheme of $\#A(V)$. Since by assumption, $k$ is an infinite field, there is a $k$-rational point $\phi$ in the non-empty open complement of  $\overline{Z} \bigcup (\cup_{i=1}^r Z_{p_i})$ in $\#A(V)$ \ie $\phi$ is surjective on every fiber.
	\end{proof}
	\begin{remark}
		The above result holds for any $E$ and $F$ such that $F$ has admissible type with respect to $E$ and $\pi_*\mathcal{H}om(E, F)$ is globally generated.    
	\end{remark}
	
	\begin{remark}
		For two types $\alpha$ and $\beta$, there is a partial order where $\alpha \geq \beta$ if $m_{p, i}(\alpha) \geq m_{p,i}(\beta)$ for each stacky point $p$ and each weight $i \in \mathbb{Z}/e_p\mathbb{Z}$. Then using the above results iteratively, one gets that for vector bundles $E$ of type $\alpha$ and $F$ of type $\beta$ on $\mf{C}$ with $\alpha \geq \beta$, for large enough $m$, a generic morphism $E(m) \to F$ is a surjection.
	\end{remark}
	
	\section{\texorpdfstring{Constructing direct $\mathbb{A}^1$-concordances}{a1concordance}}\label{section:concordance}

	We begin by defining the notion of $\mathbb{A}^1$-concordance for vector bundles on a stacky curve, directly analogous with the definition for schemes in \text{\cite[Definition 5.2]{AKW17}}.

	\begin{definition}\label{def:concordance}
		Let $\mf{C}$ be a stacky curve over a field $K$. Choose two distinct points $x_0$ and $x_1$ in $\bb{A}^1_K$. Two vector bundles $\mc{E}_0$ and $\mc{E}_1$ on $\mf{C}$ are said to be \textit{directly $\bb{A}^1$--concordant} if there exists a vector bundle $\mc{E}$ over $\mf{C}\times \bb{A}^1_K$ such that $\mc{E}_0\xrightarrow{\,\,\simeq\,\,}\mc{E}|_{\mf{C}\times\{x_0\}}$ and $\mc{E}_1\xrightarrow{\,\,\simeq\,\,}\mc{E}|_{\mf{C}\times\{x_1\}}$. The vector bundles $\mc{E}_0$ and $\mc{E}_1$ are said to be \textit{$\bb{A}^1$--concordant} if they are equivalent under the equivalence relation generated by direct $\bb{A}^1$--concordance. 
	\end{definition}
	
	\begin{remark}
		It is immediate that the equivalence relation generated by direct $\mathbb{A}^1$-concordances is the same as the equivalence relation defined in Definition \ref{def:naive-a1-homotopy}, when $\fr{X}$ is the moduli stack of vector bundles on $\mf{C}$.
	\end{remark}
	
	\begin{proposition}\label{prop:concordance}
		Let $\mf{C}$ be a projective stacky curve over an infinite field $k$. Consider a field extension $K\supset k$, and let $E_0$ and $E_1$ be two vector bundles on $\fr C_K$ of rank $n$
        and multiplicities $\boldsymbol{m}$ satisfying $\det(\pi'_*E_i)\simeq \mc L$ for some line bundle $\mc L\in \Pic(C_K)$, where $\pi':\mf C_K \to C_K$ is the coarse moduli map. Then $E_0$ and $E_1$ are directly $\#A^1$-concordant.
	\end{proposition}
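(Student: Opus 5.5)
We follow the strategy of Hogadi--Yadav for schematic curves: realise both $E_0$ and $E_1$ as extensions of a common quotient by a line bundle, and interpolate the extension classes linearly over $\mathbb{A}^1_K$. Since $k$ is infinite, so is $K$. We may assume $n\geq 2$: if $n=1$ the multiplicities and $\det\pi'_*E_i\simeq\mc L$ determine $E_i$ up to isomorphism, by the unique decomposition $L\cong\pi^*M\otimes\bigotimes_p\mc O(\mc G_p)^{a_p}$ recalled in the proof of Lemma~\ref{lem:tensor product slope}, so $E_0\simeq E_1$ and the constant family works.

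\textbf{Step 1: choose a common quotient.} Let $\alpha$ be the common type of $E_0$ and $E_1$; both have the same degree $d=\deg\mc L$. Fix an admissible type $\beta$ with respect to $\alpha$, for instance by dropping one weight $i_p$ at each stacky point $p$. Choose $m\ge m(\mf C_K,\alpha)$ as in Proposition~\ref{prop:gen}, and pick an $\mc E$-semistable vector bundle $F$ on $\mf C_K$ of type $\beta$ with $\deg\pi'_*F=d+mn$. I expect this existence claim to be the main obstacle: we need the relevant semistable moduli to be nonempty over $K$. To get around it I would first try to weaken the hypothesis. The proof of Proposition~\ref{prop:gen} uses semistability of $F$ only through the bound $\mu^{\min}_{\mc E}(F)\ge$ (linear in $m$), so any $F$ of type $\beta$ whose minimal slope grows linearly in $m$ will do. A concrete choice is $F=\bigoplus_j L_j$, a sum of line bundles of the prescribed multiplicities, with the degrees of the $L_j$ balanced so that they differ by a bounded amount; $\mu^{\min}$ is then controlled. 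With this, Proposition~\ref{prop:surjection} gives surjections $\phi_i:E_i(m)\twoheadrightarrow F$ for $i=0,1$.

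\textbf{Step 2: compare the kernels.} Put $N_i:=\ker\phi_i$. This is a line bundle. Its multiplicities are forced to be the ones dropped, namely weight $i_p$ at each $p$, and $\det\pi'_*N_i$ is determined by $\mc L(mn)$ and $F$. The argument needs $\det$ to be multiplicative in short exact sequences after $\pi'_*$. This holds because $\pi'_*$ is exact for tame stacks, so $\det\pi'_*E_i(m)\simeq\det\pi'_*N_i\otimes\det\pi'_*F$. Combining this with the $n=1$ uniqueness above gives $N_0\simeq N_1=:N$. Thus $E_i(m)$ is the extension of $F$ by $N$ with some class $\xi_i\in\Ext^1(F,N)$.

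\textbf{Step 3: the concordance.} Consider the family of classes $\xi_t=(1-t)\xi_0+t\xi_1$ over $\mathbb{A}^1_K$. Since $\Ext^1_{\mf C_K\times\mathbb{A}^1}(p^*F,p^*N)=\Ext^1(F,N)\otimes K[t]$ by flat base change, this family defines a universal extension $0\to p^*N\to\mc V\to p^*F\to0$ on $\mf C_K\times\mathbb{A}^1_K$. The bundle $\mc V$ is locally free, its fibres at $t=0,1$ are $E_0(m)$ and $E_1(m)$, and all fibres have type $\alpha$ twisted by $m$. Setting $\mc V':=\mc V\otimes\pi^*\mc O_C(-m)$ gives a direct $\mathbb{A}^1$-concordance between $E_0$ and $E_1$. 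The fixed-determinant condition is preserved along the family because $\det\pi_*$ of an extension equals $\det\pi_*N\otimes\det\pi_*F$, which is constant.
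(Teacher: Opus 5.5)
Your proof is correct and has the same architecture as the paper's. You take a common admissible-type quotient $F$ of $E_0(m)$ and $E_1(m)$ via Proposition~\ref{prop:surjection}. You show the line-bundle kernels are isomorphic using exactness of $\pi'_*$ and a comparison of $\det\pi'_*$. You then take a straight line of extension classes. Your identification $\Ext^1(p^*F,p^*N)\cong\Ext^1(F,N)\otimes_K K[t]$ by flat base change is a more direct substitute for the paper's appeal to representability of the relative $\mathcal{E}xt^1$ functor and pullback of a universal extension.

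The genuine difference is the choice of $F$. The paper takes $F$ to be $\mc E$-semistable of rank $n-1$. It does not prove that such an $F$ exists at this point, but defers it to the induction in Theorem~\ref{thm:A1sc}, so the proposition as proved there is conditional. You observe that semistability enters Proposition~\ref{prop:gen} only to replace $\mu^{\min}_{\mc E}(F)$ by $\mu_{\mc E}(F)$, and you use a direct sum of line bundles with the prescribed characters instead. This makes the proposition unconditional and independent of the main induction. You also handle $n=1$, which the paper's argument does not cover because Proposition~\ref{prop:surjection} needs $n\ge 2$.

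When you write it up, state the required estimate precisely. Linear growth of $\mu^{\min}_{\mc E}(F)$ in $m$ is not enough; you need $\mu^{\min}_{\mc E}(F(-m))=\mu^{\min}_{\mc E}(F)-\rank(\mc E)\,m\to+\infty$, normalized as in the proof of Proposition~\ref{prop:gen}. Your balanced choice achieves this. Indeed $d_{\mc E}(L_j)=\rank(\mc E)\deg\pi'_*L_j+O(1)$, and the degrees are about $(d+mn)/(n-1)$, so $\mu^{\min}_{\mc E}(F(-m))\geq\rank(\mc E)\,\frac{d+m}{n-1}-O(1)$.

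You can simplify further. The same $F$ is used for both $E_0$ and $E_1$, so $\det\pi'_*F$ plays no role in matching the kernels. You may therefore take every $L_j$ to be $\pi'^*\mc O_{C_K}(2m)$ twisted by the required stacky characters. This avoids balancing, and also any constraint on degrees of line bundles on $C_K$ when $K$ is not algebraically closed.
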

	\begin{proof}
		For any integer $m\ge 0$, let $E_i(m):= E_i\otimes \pi'^*\mc{O}_{C_K}(m)$ as before. Note that 

        \noindent
		\[\det(\pi'_*(E_i(m)))\simeq \det(\pi_*E_i)\otimes \mc{O}_{C_K}(nm) = \mc{L}(nm)\,.\]
		Let $\mc{E}$ be a generating sheaf on $C_K$, and consider an $\mc{E}$-semistable vector bundle $F$ of rank $(n-1)$ with $\det(\pi'_*F)\simeq \mc{L}(mn)$ and having multiplicities as in the hypothesis of Proposition~\ref{prop:surjection}. Now, for $m$ large enough, Proposition \ref{prop:surjection} provides surjections $E_i(m)\xtwoheadrightarrow{\,} F$ for $i=0,1$. The computation of the determinants shows that both the maps $E_i(m)\xtwoheadrightarrow{\,} F$ have isomorphic kernels as line bundles, say $M$. Thus, $E_0$ and $E_1$ give rise to two extension classes $e_0$ and $e_1$ in $\text{Ext}^1(F(-m),M(-m))$. Using the results from change and cohomology for tame stacks in the case $\mf{C}_K \longrightarrow
		\Spec K$ (see \cite[Remark A.2.5]{Bro12}) and in analogy with \cite{Lan83}, it follows that the  moduli functor given by for each $S \in {\rm Sch}/K$ $$\xymatrix{
			S\x \mf C_{K} \ar[r]^{q_S} \ar[d]^{p_S} & \mf C_{K} \ar[d]^f \\
			S\ar[r] \ar[r] & \Spec K}
		$$ 
		where 
		\medskip
		$$ S\mapsto E(S)=H^0\Bigl(S, \&{Ext}^1_{p_S}\bigl(q_S^*F(-m), q_S^*M(-m)\bigr)\Bigr)$$

		\noindent
		is representable by an affine space $V=\bb{A}^N_K$. The extension classes $e_0$ and $e_1$ can be considered as points in $V$. Consider the line joining $e_0$ and $e_1$; this gives rise to a map $f:\bb{A}^1_K \longrightarrow V$ satisfying $f(0)=e_0$ and $f(1)=e_1$. Pulling back the universal extension on $\mf{C}_K\times V$ to $\mf{C}_K\times \bb{A}^1_K$ gives rise to a direct $\bb{A}^1$-concordance between $E_0$ and $E_1$.
	\end{proof}
	
	\begin{remark}
		In the above proof, we assume the existence of an $\mc{E}$-semistable vector bundle $F$ of rank $(n-1)$ with $\det(\pi'_*F)\cong \mc{L}(mn)$ and having multiplicities as in Proposition \ref{prop:surjection}. The existence is clear for $n = 2$ as in \cite[Corollary 1.2.11]{Taams} and will be established inductively for higher ranks in the proof of theorem \ref{thm:A1sc}.
	\end{remark}
	
	\begin{remark}
		This proposition implies that the  full moduli stack $\mc{M}_{\mf{C}}\bigl(n,\underline{m},L\bigr)$ is $\mathbb{A}^1$-connected; see \S~\ref{vb on stacky curve} for notations. This was already shown in \cite[Theorem 3.7]{CC25} as it can be considered as an iterated flag bundle over $\mathcal{M}_C(n, L)$. The above result gives an alternate proof by constructing direct $\mathbb{A}^1$-concordances in $\mc{M}_{\mf{C}}\bigl(n,\underline{m},L\bigr)$.
	\end{remark}

	\section{\texorpdfstring{$\bb{A}^1$-connectedness of $\mc{M}^{\mc{E}-\text{ss}}\bigl(n,\underline{m},L\bigr)$}{a1-concordance-ss}}\label{section:a1-connectedness}
	We are now in a position to establish our main result concerning the $\bb{A}^1$-connectedness of the moduli stack $\mc{M}^{\mc{E}-\text{ss}}\bigl(n,\underline{m},L\bigr)$; see \S~\ref{vb on stacky curve} for notations.
	
	\begin{theorem}\label{thm:A1sc}
		Let $\mf{C}$ be a projective stacky curve over an infinite field $k$ and $\mc{E}$ be a generating sheaf on $\mf C$. Then $\mc{M}^{\mc{E}-\text{ss}}\bigl(n,\underline{m},L\bigr)$ is $\bb{A}^1$-connected.
	\end{theorem}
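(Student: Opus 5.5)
By Lemma~\ref{lem:a1-connected-condition}, it suffices to show that $S^{\rm pre}\bigl(\mc{M}^{\mc{E}-\text{ss}}(n,\underline{m},L)\bigr)(\Spec K)=\ast$ for every finitely generated separable extension $K/k$. So we fix two $\mc{E}_K$-semistable bundles $E_0,E_1$ on $\mf C_K$ of rank $n$, multiplicities $\underline{m}$ and $\det(\pi'_*E_i)\simeq L_K$, and we aim to join them by a chain of naive $\bb{A}^1$-homotopies that stays inside the semistable locus. The argument proceeds by induction on $n$. For $n=1$ the stack is a gerbe over a point: a line bundle on $\mf C_K$ is determined by $\pi'_*$ and its multiplicities by \cite[Corollary 1.2.11]{Taams}, and such bundles are automatically semistable. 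The induction also has to deliver the auxiliary input flagged in the remark after Proposition~\ref{prop:concordance}, namely that for each admissible type $\beta$ of rank $n-1$ and each determinant, an $\mc{E}$-semistable bundle $F$ of that type exists. I will obtain this as a byproduct: the rank-$(n-1)$ stack is nonempty, for instance by Corollary~\ref{cor:ratnpt}-type reasoning, or by twisting a direct sum of line bundles of suitable multiplicities and checking that the semistable locus is nonempty via its openness and the irreducibility of $\mc{M}_{\mf C}(n-1,\underline{m}',L')$.

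First step: apply Proposition~\ref{prop:concordance}. This gives a vector bundle $\mc{V}$ on $\mf C_K\times\bb{A}^1_K$ with $\mc{V}|_0\simeq E_0$ and $\mc{V}|_1\simeq E_1$. It arises from the line $f(t)=(1-t)e_0+te_1$ in $\Ext^1(F(-m),M(-m))$. Semistability is open in flat families, so the set $U\subset\bb{A}^1_K$ where $\mc{V}_t$ is $\mc{E}$-semistable is a Zariski open containing $0$ and $1$. It is nonempty, but a priori it is not all of $\bb{A}^1_K$. Restricting $\mc{V}$ to $U$ therefore gives only a map $U\to\mc{M}^{\mc{E}-\text{ss}}$, not a naive homotopy.

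Second step, which I expect to be the main obstacle: modify the family over the finitely many closed points of $\bb{A}^1_K\setminus U$. At each such point $t_0$, the local ring $\mc{O}_{\bb{A}^1,t_0}$ is a DVR. Here I will invoke the Langton-type theorem for $\mc{E}$-semistable bundles on stacky curves \cite{Hua23}. It says that a family over $\Spec$ of a DVR whose generic fibre is semistable can be altered by a sequence of elementary modifications along the special fibre, without changing the generic fibre, so that the special fibre becomes semistable. These modifications glue with $\mc{V}|_U$ to produce a bundle $\mc{V}'$ on $\mf C_K\times\bb{A}^1_K$ all of whose fibres are $\mc{E}$-semistable. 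Two points must be checked. First, the elementary modifications are along divisors $\mf C\times\{t_0\}$, so the rank and the multiplicities are locally constant and do not change. Second, the determinant $\det\pi'_*\mc{V}'_t$ might shift by a degree, but it stays constant in $t$. Both fibres $\mc{V}'_0=E_0$ and $\mc{V}'_1=E_1$ are untouched, so constancy forces $\det\pi'_*\mc{V}'_t\simeq L_K$ throughout. Strictly speaking, constancy shows this determinant is constant only up to a line bundle pulled back from $\bb{A}^1_K$, and $\Pic(\bb{A}^1_K)=0$ removes that ambiguity. The delicate part is that \cite{Hua23} must apply with the generating sheaf $\mc{E}$, with the stacky (tame) structure, and over a non-closed field $K$. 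If the modification is only available after a finite extension, I would try to descend it, or else use that the Harder--Narasimhan-based modification in Langton's argument is canonical and hence defined over $K$.

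Finally, $\mc{V}'$ gives a map $\bb{A}^1_K\to\mc{M}^{\mc{E}-\text{ss}}(n,\underline{m},L)$ connecting $E_0$ and $E_1$, so $S^{\rm pre}(\Spec K)=\ast$, and Lemma~\ref{lem:a1-connected-condition} concludes the proof.
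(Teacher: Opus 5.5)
Your proposal follows the paper's proof essentially step for step. The shared ingredients are: induction on $n$, with the rank-$(n-1)$ case supplying (via Morel--Voevodsky, i.e.\ Corollary~\ref{cor:ratnpt}-type reasoning) a semistable $F$ of admissible type; the extension-class concordance of Proposition~\ref{prop:concordance}; openness of the semistable locus; and the Langton-type result of \cite{Hua23} to repair the finitely many bad points of $\bb{A}^1_K$, which the paper phrases as universal closedness of $\mc{M}^{\mc{E}-\text{ss}}$. Your backup route to the existence of $F$ via irreducibility and openness would not work on its own, since it only gives density of the semistable locus once that locus is known to be nonempty; your first route is the one the paper uses.
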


	\begin{proof}
		We prove the theorem by induction on $n$, the rank of the vector bundles. For $n=1$, we have $\mc{M}^{\mc{E}-\text{ss}}\bigl(1,\underline{m},L\bigr)=\mc{M}\bigl(1,\underline{m},L\bigr)\simeq B\bb{G}_m$, which is clearly $\#A^1$-connected. 
		
		\noindent
		Assume the theorem is true for $(n-1)$. Then by \cite[Remark 2.5, page 106]{MV99}, the moduli stack  $\mc{M}^{\mc{E}-\text{ss}}\bigl(n-1,\underline{m}',L'\bigr)$ admits a $K$-rational point for any choice of $\boldsymbol{m}'$ and $L'$, and for any finitely generated field extension $K$ over $k$ (for this we note that for such $K$, $\Spec K$ is a Henselian local scheme). Let $E_0$ and $E_1$ be two $\mc E_K$-semistable vector bundles on $\mf C_K$ coming from $\mc{M}^{\mc{E}-\text{ss}}\bigl(n,\underline{m},L\bigr)(\Spec K)$.  Choose an $\mc{E}_K$-semistable vector bundle $F$ on $\mf{C}_K$ of rank $(n-1)$ and determinant $L(mn)$ and multiplicities as in the hypothesis of Proposition \ref{prop:surjection}. Thus, the proof of Proposition \ref{prop:concordance} can be applied to obtain a direct $\bb{A}^1$-concordance between $E_0$ and $E_1$. This implies the existence of an $\mc E_K$-semistable vector bundle $\ms E$ on $\mf C_K\times_K \bb{A}^1_K$ satisfying $\ms E\big|_{\mf C_K\times\{i\}}\simeq E_i,\,\,i=0,1$. As $\mf C_K\times_K \bb{A}^1_K\simeq \mf C\times_k \bb{A}^1_K$, $\ms E$ corresponds to a morphism
		\begin{align*}
			\varphi: \bb{A}^1_K \longrightarrow \mc{M}\bigl(n,\underline{m},L\bigr)
		\end{align*}
		satisfying $\varphi(0)=E_0$ and $\varphi(1)=E_1$. Now, the semistable locus of the family $\ms E$, namely $\mc{U}:= \{p\in \bb{A}^1_K\,\mid\,\ms E\big|_{\mf C_K\times\{p\}}\,\,\text{is}\,\,\mc{E}_K-\text{semistable}\}$, is open in $\bb{A}^1_K$. Thus, $\varphi|_{\mc U}$ factors through the open substack $\mc{M}^{\mc E-ss}\bigl(n,\underline{m},L\bigr)$, such that the diagram
		\begin{align*}
			\xymatrix{
				\bb{A}^1_K \ar[rr]^(.45){\varphi} && \mc{M}\bigl(n,\underline{m},L\bigr) \\
				\mc{U} \ar@{^{(}->}[u] \ar[rr]^(.45){\varphi|_{U}} && \mc{M}^{\mc E-ss}\bigl(n,\underline{m},L\bigr)
				\ar@{^{(}->}[u]
			}
		\end{align*} commutes.
		Since $\mc{M}^{\mc E-ss}\bigl(n,\underline{m},L\bigr)$ is universally closed over $k$ (see~\cite{Hua23}), the map $\varphi\big|_{\mc U}$ can be extended to a morphism $\tilde{\varphi}: \bb{A}^1_K\longrightarrow \mc{M}^{\mc E-ss}\bigl(n,\underline{m},L\bigr)$. Note that $\{0,1\}\subset \mc{U}$ by assumption, and thus their images remain unchanged under the modification $\tilde{\varphi}$:
		\[\tilde{\varphi}(i)=\varphi(i)=E_i,\,\,\forall\,\,i=0,1\,.\]
		This gives a direct semistable $\bb{A}^1$-concordance between $E_0$ and $E_1$. It follows from Lemma \ref{lem:a1-connected-condition} that $\mc{M}^{\mc E-ss}\bigl(n,\underline{m},L\bigr)$ is $\bb{A}^1$-connected.
	\end{proof}
	
	\begin{remark}
		Although \cite{Hua23} works with the assumption of an algebraically closed base field, universally closedness is fpqc local for morphism of stacks $\mathcal{X} \to \textrm{Spec}(k)$.
	\end{remark}

	By \cite[Remark 2.5, page 106]{MV99}, we can deduce the following corollary:

    \begin{corollary}\label{cor:ratnpt}
        For any finitely generated field extension $K$ of an infinite field $k$ and any discrete data $(n,\underline{m},L)$ in the sense of \textsection 2.2, there exists a $\mc{E}$-semistable vector bundle on $C_K$ with those invariants.
    \end{corollary}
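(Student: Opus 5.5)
The plan is to deduce the corollary from Theorem~\ref{thm:A1sc} by combining $\bb{A}^1$-connectedness with the fact that stalks of $\pi_0^{\bb{A}^1}$ at henselian local schemes detect the existence of points.

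Fix the discrete data $(n,\underline{m},L)$ and write $\mc{X}:=\mc{M}^{\mc{E}-\text{ss}}\bigl(n,\underline{m},L\bigr)$. By Theorem~\ref{thm:A1sc}, $\pi_0^{\bb{A}^1}(\mc{X})\cong *$. Now let $K$ be a finitely generated field extension of $k$. Then $\Spec K$ is a henselian local essentially smooth $k$-scheme; in the separable case it is a Nisnevich point, and in general it is a cofiltered limit of smooth $k$-schemes. Evaluating the terminal sheaf on it gives $\pi_0^{\bb{A}^1}(\mc{X})(\Spec K)=*$, which is in particular non-empty.

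Next we invoke \cite[Remark 2.5, page 106]{MV99}, which says that for a henselian local $U$ the natural map $\mc{X}(U)\to\pi_0^{\bb{A}^1}(\mc{X})(U)$ is surjective. Applied to $U=\Spec K$, this shows that $\mc{X}(\Spec K)$, viewed through the nerve of the groupoid, has a $0$-simplex. That $0$-simplex is an object of the groupoid $\mc{X}(\Spec K)$. Unwinding the definition of the moduli stack, it is an $\mc{E}_K$-semistable vector bundle $E$ on $\mf{C}_K$ with rank $n$, multiplicities $\underline{m}$, and $\det(\pi'_*E)\cong L_K$. This is exactly the bundle the corollary asks for; the statement's ``$C_K$'' should be read as $\mf{C}_K$.

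The main obstacle is justifying the surjectivity $\mc{X}(\Spec K)\to\pi_0^{\bb{A}^1}(\mc{X})(\Spec K)$ for a stack rather than a space in the naive sense. We need $\mc{X}$ to be Nisnevich-fibrant enough that $\pi_0$ of the $\bb{A}^1$-localization at a henselian point receives a surjection from $\pi_0$ of the sections. I would first check that the nerve of an algebraic stack satisfies Nisnevich descent, which holds since stacks are étale sheaves of groupoids. Then I would use the Morel--Voevodsky description of $L_{\bb{A}^1}$ as a transfinite composite of $\mathrm{Sing}_*$ and fibrant replacement. Each of these steps is surjective on $\pi_0$ at henselian stalks, so the composite is as well.

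For inseparable $K$, I would pass to a limit argument over smooth models, or simply restrict the statement to separable $K$ as in Lemma~\ref{lem:a1-connected-condition}.
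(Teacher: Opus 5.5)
Your deduction matches the paper's proof, which obtains the corollary in one line from Theorem~\ref{thm:A1sc} and \cite[Remark 2.5, page 106]{MV99}: sections surject onto $\pi_0^{\mathbb A^1}(\mathcal X)$ at henselian local (Nisnevich) points. Your sketch via $\mathrm{Sing}_*$ and Nisnevich-fibrant replacement simply unpacks that citation.

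One correction concerns inseparable $K$. There $\Spec K$ is \emph{not} a cofiltered limit of smooth $k$-schemes: that would force $K\otimes_k k'$ to be reduced for every finite purely inseparable $k'/k$, i.e.\ $K/k$ separable. So your proposed limit argument fails. None is needed, though. $\Spec k$ is itself a Nisnevich point, so the argument gives a $k$-point, and its base change to $K$ is still $\mathcal E_K$-semistable, since semistability is invariant under field extension.

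The substantive problem, which you inherit from the paper, is non-emptiness. The whole content of the corollary is read off from the clause $\pi_0^{\mathbb A^1}(\mathcal X)\neq\emptyset$ hidden in ``$\cong *$''. The proof of Theorem~\ref{thm:A1sc} never establishes that clause. It starts from two given $K$-points $E_0,E_1$ and shows they are concordant. That only proves $S^{\rm pre}(\mathcal X)(\Spec K)$ has \emph{at most} one element, whereas Lemma~\ref{lem:a1-connected-condition} needs exactly one.

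This cannot be repaired in the stated generality. Take $\mathfrak C=\mathbb P^1_k$ with no stacky points, $\mathcal E=\mathcal O$ (a generating sheaf, for which $\mu_{\mathcal E}$ is the ordinary slope), $n=2$ and $L=\mathcal O(1)$. Over any field, every rank-$2$ bundle of degree $1$ on $\mathbb P^1$ is $\mathcal O(a)\oplus\mathcal O(1-a)$, and none of these is semistable. Hence $\mathcal M^{\mathcal O\text{-ss}}_{\mathbb P^1_k}\bigl(2,\emptyset,\mathcal O(1)\bigr)$ is empty, and both the corollary and Theorem~\ref{thm:A1sc} fail.

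So your argument is a valid reduction to the theorem, but it is not a proof of the statement as given. By the base-change remark above, the corollary is equivalent to $\mathcal X(\Spec k)\neq\emptyset$. Under whatever extra hypotheses make it true, non-emptiness must be proved directly, since any proof of $\mathbb A^1$-connectedness via Lemma~\ref{lem:a1-connected-condition} already presupposes it.
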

	
	\begin{remark}
    In view of corollary \ref{cor:ratnpt}, the non-emptiness assumptions imposed in some results of \cite{DHMT24} (e.g. \cite[Corollary 3.1.5]{DHMT24}) can be removed.
	\end{remark}	 

   \section{\texorpdfstring{$\bb{A}^1$-connectedness of moduli stack of parabolic bundles}{a1-connected-pb}}\label{sec:parabolic}
    
	Fix a finite subset
	$D\subset C$ of distinct closed points on the curve $C$, called \textit{parabolic points},
	and the following data:
	\begin{enumerate}[$\bullet$]
		\item a positive integer $n$,
		\item for each $p\in D$, a positive integer $e_p$ and a tuple of positive
		integers $\underline{m}(p):=(m_{p,0}\,,m_{p,1},\ldots,\allowbreak  m_{p,e_p-1})$
		satisfying $\sum\nolimits_{i=0}^{e_p-1}m_{p,i} = n$.
	\end{enumerate}
	Set $\boldsymbol{e} :=\{e_p\mid p\in D\}$ and
	$\boldsymbol{m} := \{\underline{m}(p)\mid p\in D\}$. The tuple $(\boldsymbol{e,m})$ is called a \textit{quasi-parabolic data of
		rank $n$ along $D$}. The collection $\boldsymbol{e}$ is a system of
	\textit{lengths} along $D$, and $\boldsymbol{m}$ is a system of
	\textit{multiplicities} along $D$.

	\begin{definition}\label{def:quasi-parabolic-bundle}
		Fix a quasi-parabolic data $(\boldsymbol{e,m})$. A \textit{quasi-parabolic
			vector bundle of rank $n$} on $C$ is a rank $n$ vector bundle $E$ on $C$
		together with, for each $p\in D$, a flag of type $\underline{m}(p)$ on the
		fiber $E_p$:
		\smallskip
		\[
		E_p = E_{p,0}\supsetneq E_{p,1}\supsetneq \cdots
		\supsetneq E_{p,e_p-1} \supsetneq E_{p,e_p} = 0,
		\]
		satisfying $\dim(E_{p,i}/E_{p,i+1})=m_{p,i}$ for all $0\leq i< e_p$.
	\end{definition}
	
	\noindent
	For a vector bundle $\mc{F}$ of rank $n$ on a $k$-scheme $S$ and a tuple
	$\underline{m}=(m_0,\ldots,m_{e-1})$ with $\sum m_i=n$, let
	$\mathrm{Flag}_{\underline{m}}(\mc{F})\longrightarrow S$ denote the
	associated flag bundle of type $\underline{m}$. By its universal property, a
	section of this flag bundle corresponds to a chain of sub-bundles of $\mc{F}$
	of type $\underline{m}$.
	
	\begin{definition}[\text{\cite[Definition~3.1]{BY99}}]
		\label{def:families-of-quasi-parabolic-bundles}
		A \textit{family of quasi-parabolic vector bundles of type $(\boldsymbol{e,m})$ 
			on $C$ parametrized by $S$} is a vector bundle $\mc{F}$ on $C\times S$
		together with, for each $p\in D$, a section $s_p$ of
		$\mathrm{Flag}_{\underline{m}(p)}\bigl(\mc{F}|_{\{p\}\times S}\bigr)
		\longrightarrow S$.
	\end{definition}
	
	\noindent
	The moduli stack of quasi-parabolic vector bundles on $C$ of rank $n$,
	determinant $L\in\Pic(C)$ and quasi-parabolic data $(\boldsymbol{e,m})$ is
	denoted $\mc{PM}_C^{\boldsymbol{e,m}}(n,L)$. Its objects over $S$ are
	families of quasi-parabolic vector bundles of type $(\boldsymbol{e,m})$ on $C$ parametrized by $S$ as in Definition~\ref{def:families-of-quasi-parabolic-bundles} with
	$\det(\mc{F})\simeq q^*L$ where $q: C\x S\to C$ is the projection morphism.
	
	\noindent
	Let $D$ denote the set of parabolic points. Fix a quasi-parabolic data $(\boldsymbol{e,m})$ of rank $n$ along $D$. Let us denote by \begin{align*}\label{eqn:weights}
		\boldsymbol{\alpha} : =\{(\alpha_{_{p,1}}<\alpha_{_{p,2}}<\cdots<\alpha_{_{p,e_p}})\ \mid\ p\in D\} 
	\end{align*}
    a set of \textit{weights} assigned to this data, where $\alpha_{p,i}$'s are real numbers lying between $0$ and $1$.
	
	\begin{definition}\label{def:parabolic-bundles}
		Let $E$ be a quasi-parabolic vector bundle of rank $n$ on $C$ associated to the data $(\boldsymbol{e,m})$. 
		\begin{enumerate}[(1)]
			\item The $\boldsymbol{\alpha}$-\textit{degree} of $E$ is defined as the real number
			\[\deg_{\boldsymbol{\alpha}}(E) := \deg(E)+\sum_{p\in D}\sum_{i=1}^{e_p}m_{_{p,i}}\alpha_{_{p,i}}.\]
			\item The $\boldsymbol{\alpha}$-\textit{slope} of $E$ is defined to be the real number
			\[\mu_{\boldsymbol{\alpha}}(E):=\deg_{\boldsymbol{\alpha}}(E)/n.\] 
		\end{enumerate}
	\end{definition}
	This allows us to define the notion of $\boldsymbol{\alpha}$-semistability and $\boldsymbol{\alpha}$-stability for a quasi-parabolic vector bundle as usual. The quasi-parabolic moduli stack
	$\mc{PM}^{\boldsymbol{e,m}}\left(n,L\right)$ contains an open substack consisting of $\boldsymbol{\alpha}$-semistable quasi-parabolic vector bundles on $C$ of rank $n$, determinant $L$ and quasi-parabolic data $(\boldsymbol{e,m})$ along $D$. We shall denote this open substack as
	\begin{align*}
		\mc{PM}^{\boldsymbol{\alpha}-ss,\boldsymbol{e,m}}_C\left(n,L\right).
	\end{align*}
	
	\noindent
	Fixing a finite set $D \subset C$ and quasi-parabolic data $(\boldsymbol{e,m})$, one can construct a tame stacky curve $\mf{C}$ with coarse space $C$ (as an iterated root stack over $C$) such that one has an identification of moduli stacks (see \cite[Th\'eor\`eme~4]{Bor07}, \cite[Lemma 7.9]{Nir08} and \cite[Theorem 3.2.5]{Taams})
	$$\mc{PM}_C^{\boldsymbol{e,m}}(n,L) \simeq \mc{M}_{\mf{C}}\bigl(n,\underline{m},L\bigr)$$
	
	\noindent
	where $\underline{m} = \boldsymbol{m}$. Furthermore, given a set of weights $\boldsymbol{\alpha}$ as above, one can construct a generating sheaf $\mathcal{E}_{\boldsymbol{\alpha}}$ such that the above isomorphism of stacks  restricts to an isomorphism on the semistable open substacks
	\begin{equation}\label{eq:parabolic-stacky}
	    \mc{PM}^{\boldsymbol{\alpha}-ss,\boldsymbol{e,m}}_C\left(n,L\right)\cong \mc{M}_{\mf{C}}^{\mathcal{E}_{\boldsymbol{\alpha}}-\text{ss}}\bigl(n,\underline{m},L\bigr)\,.
	\end{equation} 
	\begin{corollary}\label{cor:parabolic} Let $C$ be a smooth projective curve over an infinite field. The moduli stack 
     $$\mc{PM}^{\boldsymbol{\alpha}-ss,\boldsymbol{e,m}}_C\left(n,L\right)$$ of quasi-parabolic vector bundles on $C$ of rank $n$,
	determinant $L\in\Pic(C)$ and quasi-parabolic data $(\boldsymbol{e,m})$ is $\#A^1$-connected.
 \end{corollary}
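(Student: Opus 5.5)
The plan is to deduce Corollary~\ref{cor:parabolic} directly from Theorem~\ref{thm:A1sc} by transporting $\bb{A}^1$-connectedness along the identification \eqref{eq:parabolic-stacky}. First, from $D\subset C$ and the quasi-parabolic data $(\boldsymbol{e,m})$, form the tame stacky curve $\mf C$ as the iterated root stack of $C$ along the points of $D$, with root orders $e_p$. Its coarse space is $C$, which is smooth projective, and its stacky points map to $D$ with stabilizers $\mu_{e_p}$. To apply the theorem we need $\mf C$ to be a projective stacky curve in the sense of \cite[Definition 1.3.1]{Taams}. This holds because a root stack over a projective curve is a tame Deligne--Mumford stack with projective coarse space, and it carries a generating sheaf. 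The tameness condition means each $e_p$ must be prime to $\mathrm{char}\,k$. I would take this as implicit in the cited identification \cite{Bor07, Nir08, Taams}, or else state it as a standing assumption.

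Next, for the given weights $\boldsymbol{\alpha}$, take the generating sheaf $\mc E_{\boldsymbol{\alpha}}$ on $\mf C$ supplied by \cite[Theorem 3.2.5]{Taams}. Under $\mc{PM}_C^{\boldsymbol{e,m}}(n,L)\simeq \mc{M}_{\mf C}(n,\underline m,L)$, this sheaf is built so that the $\boldsymbol{\alpha}$-slope of a parabolic bundle agrees with the $\mc E_{\boldsymbol\alpha}$-slope of the corresponding stacky bundle, up to a positive scalar and an additive constant depending only on rank. The same comparison applies to subobjects, since parabolic subbundles with induced flags correspond to saturated subsheaves on $\mf C$. This yields the isomorphism of open substacks \eqref{eq:parabolic-stacky}.

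I expect the main subtlety to be the weights. If the $\alpha_{p,i}$ are irrational, a generating sheaf can only realize rational weights. To handle this I would perturb $\boldsymbol{\alpha}$ to a rational $\boldsymbol{\alpha}'$ close enough that $\boldsymbol{\alpha}$- and $\boldsymbol{\alpha}'$-semistability coincide for the fixed rank, degree and flag type. This is possible because only finitely many walls, defined by subbundle types of bounded rank, are relevant. After this, I would rescale the weights so that they become those of a vector bundle $\mc E_{\boldsymbol{\alpha}'}$, meaning the multiplicities $m_{p,i}(\mc E)$ are proportional to successive differences of the weights. All multiplicities must be nonzero, which is what makes the sheaf generating.

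Finally, since the stacks are isomorphic as stacks over $k$, their nerves are isomorphic simplicial presheaves on $\mathrm{Sm}_k$. Hence they have isomorphic $\pi_0^{\bb A^1}$. By Theorem~\ref{thm:A1sc} applied to $(\mf C,\mc E_{\boldsymbol{\alpha}'})$ over the infinite field $k$, $\mc{M}^{\mc E_{\boldsymbol{\alpha}'}-ss}_{\mf C}(n,\underline m,L)$ is $\bb A^1$-connected, and therefore so is $\mc{PM}^{\boldsymbol{\alpha}-ss,\boldsymbol{e,m}}_C(n,L)$.
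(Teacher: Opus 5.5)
Your proposal follows the paper's route. The paper deduces Corollary~\ref{cor:parabolic} in one line, by transporting Theorem~\ref{thm:A1sc} along the identification \eqref{eq:parabolic-stacky}, and gives no further argument. Your extra remarks on tameness ($e_p$ prime to $\mathrm{char}\,k$) and on irrational weights (an $\mc E$-slope only produces rational weights with denominator $\rank(\mc E)$) address points the paper leaves implicit. The one place where your added argument needs repair is the perturbation of $\boldsymbol\alpha$.

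For each candidate destabilizing type $t=(r,d',\underline m')$ with $r<n$, the quantity $f_t(\boldsymbol\alpha)=\mu_{\boldsymbol\alpha}(E)-\mu_{\boldsymbol\alpha}(E')$ is affine in $\boldsymbol\alpha$ with rational coefficients. Its nonzero values are bounded away from $0$ uniformly in $t$, so taking $\boldsymbol\alpha'$ close enough preserves the sign of every nonzero $f_t(\boldsymbol\alpha)$. It says nothing, however, about types with $f_t(\boldsymbol\alpha)=0$, and an irrational $\boldsymbol\alpha$ can lie on such a wall. For instance, take $n=2$, $\deg L$ odd, and two parabolic points $p,q$ with full flags, where $(\alpha_{p,2}-\alpha_{p,1})+(\alpha_{q,2}-\alpha_{q,1})=1$ with irrational summands. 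Let $E=L_1\oplus(L\otimes L_1^{-1})$ with $\deg L_1=(\deg L+1)/2$, with flags avoiding the fibres of $L_1$. Then $E$ is strictly $\boldsymbol\alpha$-semistable, but it is unstable for every nearby $\boldsymbol\alpha'$ with that sum $<1$, so the open substack changes.

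The fix is to take $\boldsymbol\alpha'$ rational inside the affine subspace cut out by the finitely many equations $f_t=0$ with $f_t(\boldsymbol\alpha)=0$. This subspace is defined over $\bb Q$ and contains $\boldsymbol\alpha$, so its rational points are dense in it.

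Two smaller points. First, the slope comparison must read $\mu_{\boldsymbol\alpha}=\lambda\,\mu_{\mc E}+c$ with $c$ independent of the sheaf, which is what shifting the weights at each point of $D$ produces. A constant varying with rank would change the semistable locus, and so would an actual rescaling of $\boldsymbol\alpha$ (as opposed to clearing denominators through $\rank(\mc E)$). Second, an equivalence of stacks gives objectwise weak equivalences of nerves rather than isomorphisms, which is all that $\pi_0^{\bb A^1}$ needs.
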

	
	\begin{remark}
		In \cite[Theorem 4.3]{CC25}, it was shown that, the
		moduli stack $\mc{PM}^{\boldsymbol{\alpha}\text{-}ss,\boldsymbol{e,m}}_C(n,L)$
		is $\bb{A}^1$--connected under the assumptions, when the weights $\boldsymbol{\alpha}$ are small and generic, and also $\gcd(n,\deg L)=~1$. The identification above allows us to extend the result for any $\boldsymbol{\alpha}$, $n$ and $L$. 
	\end{remark}

\end{document}